\newcommand\norm[1]{\left\lVert#1\right\rVert}
\newcommand\inn[1]{\left\langle#1\right\rangle}
\newtheorem{theorem}{Theorem}[section]
\newtheorem{corollary}{Corollary}[theorem]
\newtheorem{proposition}{Proposition}[section]
\def\R{\mathbb{R}}
\def\H{\mathcal{H}}
\def\S{\mathbb{S}}
\def\d{{\rm d}}
\def\F{\mathcal{F}}
\def\P{\mathbb{P}}
\def\Y{\mathcal{Y}}
\def\L{\mathcal{L}}
\def\B{\mathcal{B}}
\begin{document}

\begin{frontmatter}



\title{Functional Gaussian Fields on Hyperspheres with their Equivalent Gaussian Measures} 


\author[1]{Alessia Caponera}
\author[2]{Vinicius Ferreira}
\author[3,4]{Emilio Porcu}

\address[1]{Luiss University, Department of AI, Data and Decision Sciences, Roma, Italy}
\address[2]{IMPA, Rio de Janeiro, Brasil}
\address[3]{Khalifa University, Department of Mathematics, Abu Dhabi, The United Arab Emirates}
\address[4]{ADIA Lab, Abu Dhabi, The United Arab Emirates}

\begin{abstract}
We develop a general framework for isotropic functional Gaussian fields on the $d$-dimensional sphere $\S^{d}$, where the field takes values in a separable Hilbert space $\H$. We establish an operator-valued extension of Schoenberg’s theorem and show that the covariance structure of such fields admits a representation in terms of a sequence of trace-class $d$-Schoenberg operators. This yields an explicit spectral decomposition of the covariance operator on $L^{2}(\S^{d};\H)$. \par
We then derive a functional version of the Feldman-Hájek criterion and prove that equivalence of the Gaussian measures induced by two Hilbert-valued spherical fields is determined by a Hilbert summability criterion that involves Schoenberg functional sequences,
thereby extending classical results for scalar and vector fields on spheres to the infinite-dimensional setting. We further show how equivalence of all scalar projections is contained within, and dominated by, the functional criterion.
\par 
The theory is illustrated through two classes of models: (i) a multiquadratic bivariate family on $\S^{d}$, for which the equivalence region can be expressed in closed form in terms of cross-correlation and geodesic decay parameters, and (ii) an infinite-dimensional Legendre-Matérn construction, where operator-valued spectra lead to explicit identifiability conditions on smoothness and scale parameters. These examples demonstrate how the operator-valued Schoenberg coefficients govern both the geometry and the measure-theoretic behavior of functional spherical fields.
\par
Overall, the results provide a unified spectral framework for Gaussian measures on $L^{2}(\S^{d};\H)$, bridging harmonic analysis, operator theory, and stochastic geometry on manifolds, and offering foundational tools for functional data analysis, spatial statistics, and kernel methods on spherical domains. 
\end{abstract}

\begin{keyword}
Functional Gaussian fields; Hilbert-valued random fields; Spherical harmonics; Operator-valued Schoenberg sequences; Equivalence of Gaussian measures; Feldman-Hájek criterion; Hyperspherical analysis; Spatial statistics on spheres; Operator-valued covariance functions.


\end{keyword}

\end{frontmatter}



\section{Introduction}

Functional data analysis (FDA) has become a central subject in many branches of theoretical and applied sciences. Within the paradigm of FDA, each datum is an element taking values over infinite dimensional spaces such as separable Hilbert spaces \citep{RamsaySilverman2005, FerratyVieu2006, HsingEubank2015}. Typical examples include curves or surfaces indexed by time, space or both, as well as images in remote sensing, trajectories in neuroscience and biomechanics, to mention just a few. Treating data as Hilbert-valued objects allows to connect FDA with solid theory coming from stochastic processes, functional and harmonic analysis within a unified framework.  \par
Stochastic processes defined over high dimensional spaces (space or space-time) are customarily called random fields or random functions \citep{Matheron:1965}. Functional Gaussian fields, which are random fields having an Hilbert space as an image set, have been largely used within both the statistics and machine learning communities \citep{BerlinetThomasAgnan2004, RasmussenWilliams2006}. In particular, the theory of reproducing kernel Hilbert spaces  has been especially popular within the realm of high dimensional fields, and perfectly suitable to treat observations that can be conceived as random objects defined over Hilbert spaces. Predictive modeling for functional random fields has attracted large interest in the last 30 years, and the reader is referred to \cite{giraldo2011ordinary, menafoglio2013universal, menafoglio2014kriging} and \cite{delicado2010statistics}, with the references therein.

\par  
Gaussian random fields defined over (hyper) spheres have a long history that traces back to harmonic analysis, probability theory, statistics and machine learning. Recently, they have become very popular as they are the backbone for the construction of statistical models for climate model outputs \citep{crippa2016population, porcu202130, CM21,caponera22EJS}. While the theory of Gaussian processes on the sphere is well developed for scalar and vector-valued (finite-dimensional) fields, little is known about the functional case. {A key reference in this direction is \cite{caponera}, which develops the spectral theory for the 2-dimensional sphere.} \par
For Gaussian fields, the finite dimensional distributions are completely specified through the covariance functions, which are positive definite. A natural assumption for covariance functions over spheres is that of geodesic isotropy \citep{porcu202130}, that is the covariance between any pair of random variables located over two different points depends exclusively on their geodesic distance, being the arccosine of the dot product between any two points located over the spherical shell. Characterization of scalar positive definite functions over $d$-dimensional spheres is due to \cite{schoenberg1942}, and subsequent generalizations to \cite{berg2017schoenberg} for the case of product spaces involving the sphere. Characterization of matrix-valued positive definite functions over spheres is due to \cite{Hannan:1980}, while the functional case is elusive so far. This theory has allowed for the construction of covariance models that are indispensable for phenomena intrinsically defined on spherical domains, including the Cosmic Microwave Background (CMB) in cosmology and geophysical variables (temperature, pressure, wind, ocean currents) in Earth system science. \par
Equivalence of Gaussian measures is of paramount importance in spatial statistics under the paradigm of infill asymptotics. They represent a tool {\em sine qua non} to evaluate asymptotic effects of misspecified best linear unbiased prediction (kriging), that is when prediction is performed with a wrong covariance function \citep[see][with the references therein, for a thorough treatment]{Stein:1999}. Further, they cover fundamental importance to evaluate the asymptotic performance of certain likelihood-based estimators for the parameters indexing any parametric class of covariance functions. While this subject was originally born within the realm of abstract probability theory \citep{skorokhod_yadrenko, Yadrenko1983, feldman}, in the last 30 years there has been a massive use of this theory to have new theoretical findings in both spatial statistics and machine learning communities. \par 
Equivalence of Gaussian measures over $d$-dimensional spheres has been studied by \cite{arafat2018equivalence} for scalar random fields. Equivalence of functional Gaussian fields has been studied over planar surfaces endowed with Euclidean metrics only, and the case of the sphere is once again elusive.  Extending these characterizations to Hilbert-valued fields requires an operator-valued analogue of the Schoenberg decomposition and a functional version of the Feldman-H\'ajek criterion. 

\medskip
The plan of the paper is the following. Section \ref{sec2} contains the necessary background material needed to understand the subsequent mathematical developments, that are provided in Section \ref{sec3}. This section contains novel theoretical results in concert with some examples. Section \ref{sec4} concludes the paper with a discussion.

\section{Background} \label{sec2}

\subsection{Scalar and Vector Gaussian Fields on Spheres}

Some parts of the paper will make use of Gaussian fields defined over finite dimensional spaces. \\
Let $Z$ be a Gaussian random field defined over the $d$-dimensional sphere with unit radius, $\S^d$, embedded in $\R^{d+1}$. We supposed $Z$ to have finite first and second order moments, so that the covariance function is well defined. Additionally, we shall work under the assumption of geodesic isotropy, so that the covariance $R:\S^d \times \S^d \to \R $ depends on the inner product between any two points located over the spherical shell. Harmonic analysis results coming from \cite{schoenberg1942} allow to decompose the function $R$ (whenever continuous) as a sum of the type 
\begin{equation}
    \label{schoenberg-1} R(x,y) = \sum_{l \ge 0} b_l C_{l}^{(d-1)/2} (x \cdot y), \qquad x,y \in \S^d, 
\end{equation}
with $C_{l}^{k}$ denoting the $l$th Gegenbauer polynomial of order $k$ \citep[see][with the references therein]{berg2017schoenberg} and where the coefficients are nonnegative with the sum $\sum_l b_l= \sigma^2$, with $\sigma^2$ denoting the variance of $Z$. We should have actually used the notation $b_{l,d}$ to emphasize that the coefficients depend on the dimension of the sphere. However, this fact is not relevant to the exposition following subsequently, and would be an additional source of obfuscation as the findings in subsequent sections are mathematically and notationally involved. \cite{berg2017schoenberg}, mimicking \cite{daley-porcu}, adopt the illustrative nomenclature $d$-{Schoenberg coefficients} for the scalars $b_l$ and consequently $\{b_l\}_{l \ge 0}$ is called a $d$-Schoenberg sequence. The extension for vector valued Gaussian fields $Z \in \R^p$, with $p$ a positive integers, can be found in \cite{Hannan:1980}, in this case, the covariance $R$ is matrix valued function from $\S^d \times \S^d $ into $\R^{p \times p}$, and the Schoenberg expansion (\ref{schoenberg-1}) is similarly attained: 
\begin{equation}
    \label{schoenberg-2} R(x,y) = \sum_{l \ge 0} B_l C_{l}^{(d-1)/2} (x \cdot y), \qquad x,y \in \S^d, 
\end{equation}
where $\{B_l\}_{l \ge 0}$ is a summable sequence of positive semidefinite matrices. \par
Let $(\Omega, {\cal A}, {\bf P}_i)$, $i=1,2$ be two probability spaces. Two measures ${\bf P}_i$ are called equivalent when they are mutually absolutely continuous. Otherwise, they are called orthogonal. Gaussian measures are the only ones having either equivalence or orthogonality (no indetermination cases). For Gaussian measures associated with two random fields, equivalence is intended {\em along the paths} of the random fields \citep{Zhang:2004}. Since Gaussian measures are uniquely determined by their mean and covariance, it is not a mystery that any condition regarding equivalence of Gaussian measures translates into conditions for the respective covariance functions, or equivalently their spectrum. \par
Equivalence of Gaussian measures associated with random fields defined over hyperspheres has been studied by \cite{arafat2018equivalence}, who proved that 
\begin{equation} \label{arafat-condition}
\displaystyle {\bf P}_1 \equiv {\bf P}_2 \iff \sum_{l \geq 0} h(l) \left( \frac{b_l^{(1)}}{b_l^{(2)}} - 1 \right)^2<\infty, \end{equation}  
where $\{b_i^{(i)}\}_{l \ge 0}$ are the $d$-Schoenberg sequences associated with the covariance functions $R_i$, $i=1,2$, and there $\{ h(l) \}_{l \ge 0}$ is a sequence of nonnegative numbers that are defined explicitly in the subsequent section (see also \cite{arafat2018equivalence}).
 
\subsection{Functional Gaussian Fields on Spheres}

\paragraph{Notation.}
Let us denote by $\H$ a separable Hilbert space, with its inner product $\inn{\cdot,\cdot}_{\H}$ and the induced norm $\norm{\cdot}_{\H}$. Denote by $x \otimes y$ the operator acting as $(x \otimes y) z = \inn{z,y}_{\H}x$. The most important case is that when $\norm{x}_{\H}=1$, the operator $x \otimes x$ is the orthogonal projection onto the direction of $x$. The space of Hilbert-Schmidt operators is denoted by $\L_2(\H)$, with its norm given by $\norm{A}_{\L_2(\H)}^2 = \sum_{k \geq 1} \norm{Ae_k}_{\H}^2$, where $\{e_k\}_{k \geq 1}$ is any orthonormal basis. The Euclidean dot product between two vectors $x,y \in \R^d$ will be denoted by $x \cdot y$ and the Euclidean norm $\norm{x}$. Let $\S^d \subset \R^{d+1}$ be the unit sphere. We denote $L^2(\S^d)$ the usual space of square-integrable scalar functions on the sphere with the inner product $\inn{f,g}_{L^2(\S^d)} := \int_{\S^d}f(x)g(x) \d \sigma(x)$, where $\d \sigma(\cdot)$ is the standard volume element on the sphere {(Haar measure)}. 
It is well known that $L^{2}(\mathbb{S}^{d})$ decomposes as a direct sum of mutually orthogonal subspaces spanned by the eigenfunctions of the Laplacian on $\mathbb{S}^{d}$. These eigenfunctions are referred to as hyperspherical harmonics. A standard orthonormal basis for the eigenspace of degree $l \ge 0$ is given by the (in this paper, \emph{real-valued}) \emph{fully normalized hyperspherical harmonics} $\{\mathcal{Y}_{l,m}, m = 1,\dots,h(l)\}$, with
$$
h(l) := \frac{(2l + d - 1)\,(l + d - 2)!}{l!\,(d - 1)!}.$$
We denote with $L^2(\S^d;\H)$  the space of square-integrable functions on the sphere taking values on $\H$, {\em i.e.,} $f:\S^d \to \H$ such that $\int_{\S^d}\norm{f(x)}_{\H}^2 \d \sigma(x) < \infty$. This is also a Hilbert Space with the inner product $\inn{f,g}_{L^2(\S^d;\H)} := \int_{\S^d} \inn{f(x),g(x)}_{\H} \d \sigma(x)$. The operators $f \otimes_{L^2(\S^d)} g$ and $f \otimes_{L^2(\S^d;\H)} g$ are defined similarly as before, for $f,g \in L^2(\S^d)$ or $L^2(\S^d;\H)$ respectively. Given a positive semi-definite operator $B$ in any of the Hilbert spaces described above, we define $B^{1/2}$ as the unique positive semi-definite operator satisfying $B^{1/2}B^{1/2} = B$. If additionally $B$ is injective, $B^{-1}$ is the (possibly unbounded) operator with domain $D(B^{-1}) := R(B)$ given by $B^{-1}\phi = f \iff Bf = \phi$. {For $(\Omega, \F, \P)$ a probability space, a $\H$-valued random variable $F$ is a measurable
map from $(\Omega, \F)$ onto $(\H, \mathfrak{B}(\H))$, $\mathfrak{B}(\H)$ denoting the Borel $\sigma$-field of $\H$. The map $F$  is Gaussian if, for all $ w$ in $\H$, the real-valued random variable $\langle F, w \rangle_\H$ is Gaussian.}\\


We consider a collection $\{Z(x), \ x \in \mathbb{S}^d\}$ of $\H$-valued random variables defined on a common probability space $(\Omega, \F, \P)$ and such that $\mathbb{E} \|Z(x)\|^2_\H < \infty$, for any $x \in \mathbb{S}^d$. We also assume that the mapping $Z : \Omega \times \S^d \to \H$ is \emph{jointly measurable}, i.e., measurable with respect to the product $\sigma$-field $\mathfrak{B}(\mathbb{S}^d) \times \F$, $\mathfrak{B}(\mathbb{S}^d)$ denoting the Borel $\sigma$-field of $\mathbb{S}^d$. We call $\{Z(x), \ x \in \mathbb{S}^d\}$ \emph{$\H$-valued spherical random field}. The mean element is well-defined as Bochner integral and for simplicity we set it to be the zero element of $\H$ (for more details, see \cite{HsingEubank2015}).

We say that the collection $\{Z(x), \ x \in \mathbb{S}^d\}$ of zero-mean $\H$-valued random variables is geodesically isotropic if $\mathbb{E} \|Z(x) \|_\H^2 < \infty$, for all $x \in \mathbb{S}^d$, and
for any $x,y \in \S^d$ the covariance function $R(x,y):= \E[Z(x) \otimes_{{\H}} Z(y)]$ 
depends only on $x\cdot y$. With some abuse of notation, we write $R(x,y) = R(x \cdot y)$. 

{
Under joint measurability with the additional assumption of isotropy, it is readily seen that $\mathbb{E} \left [ \int_{\mathbb{S}^d } \|T(x)\|^2_\H d\sigma(x)  \right] < \infty,$ and hence $Z(\omega, \cdot)$ is an element of $L^2(\mathbb{S}^d; \H)$ for $\P$-almost every $\omega \in \Omega$. In particular, the covariance operator $\B := \E[Z \otimes_{L^2(\S^d;\H)} Z]$ is well defined on $L^2(\S^d;\H)$. 
}

The spectral theory for such fields is well developed in \cite{caponera}, for the case $d=2$. The proofs for general dimension $d$ follow clearly the same steps and we state the result here without proof.

\begin{theorem}
    {Let $\{Z(x), \ x \in \mathbb{S}^d\}$ an isotropic $\H$-valued spherical random field.}
    Then the following decomposition holds:
    \begin{equation}\label{Z_decomposition}
        Z(x) = \sum_{l=0}^\infty\sum_{m=1}^{h(l)} a_{l,m}\Y_{l,m}(x), \qquad {x \in \S^d,}
    \end{equation}
    where the coefficients are $\H$-valued {random} variables given by the Bochner integral $a_{l,m}:=  \int_{\S^d}Z(x)\Y_{l,m}(x) \d\sigma(x)$. Additionally, for all $l,l' \geq 0$, $1\leq m \leq h(l)$ and $1 \leq m' \leq h(l')$, {it is true that} \begin{equation}\label{b_definition}
        \E [a_{l,m} \otimes_{\H} a_{l',m'}] = b_l \delta_{l,l'}\delta_{m,m'},
    \end{equation}
    where $b_l: \H \to \H$ is a collection of positive semi-definite trace class operators. Furthermore, the convergence in (\ref{Z_decomposition}) occurs in the following sense:
    \begin{eqnarray*}
        && \E  \norm{Z - \sum_{l=0}^L\sum_{m=1}^{h(l)}a_{l,m}\Y_{l,m}}_{L^2(\S^d;\H)}^2 \\&=&  \E \left[ \int_{S^d} \norm{Z(x) - \sum_{l=0}^L\sum_{m=1}^{h(l)}a_{l,m}\Y_{l,m}(x)}_{\H}^2 \d\sigma(x)  \right] \xrightarrow[L \to \infty]{} 0
    \end{eqnarray*}
    and 
    \begin{equation*}
       \sup_{x \in \mathbb{S}^d}   \, \E  \norm{Z(x) - \sum_{l=0}^L\sum_{m=1}^{h(l)}a_{l,m}\Y_{l,m}(x)}_{\H}^2 \xrightarrow[L \to \infty]{} 0.
    \end{equation*}
\end{theorem}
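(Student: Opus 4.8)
The plan is to run the classical spherical--harmonic expansion argument for isotropic scalar fields in the $\H$--valued setting (exactly as in \cite{caponera} for $d=2$), with scalar Fourier coefficients replaced by $\H$--valued ones and scalar variances by trace--class covariance operators. \emph{Step 1 (the coefficients).} By joint measurability and the remark preceding the statement, $Z(\omega,\cdot)\in L^{2}(\S^{d};\H)$ for $\P$--a.e.\ $\omega$. Since each $\Y_{l,m}$ is continuous, hence bounded, on the compact set $\S^{d}$, the map $x\mapsto Z(\omega,x)\Y_{l,m}(x)$ is measurable and satisfies $\int_{\S^{d}}\norm{Z(\omega,x)\Y_{l,m}(x)}_{\H}\,\d\sigma(x)\le \norm{\Y_{l,m}}_{\infty}\sigma(\S^{d})^{1/2}\norm{Z(\omega,\cdot)}_{L^{2}(\S^{d};\H)}<\infty$ a.s., so the Bochner integral $a_{l,m}$ is a well--defined $\H$--valued random variable. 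Because bounded linear functionals commute with the Bochner integral, $\inn{a_{l,m},w}_{\H}=\int_{\S^{d}}\inn{Z(\omega,x),w}_{\H}\Y_{l,m}(x)\,\d\sigma(x)$ for every $w\in\H$; thus the $a_{l,m}$ are precisely the coefficients of $Z(\omega,\cdot)$ in the orthonormal basis $\{\Y_{l,m}\otimes e_{k}\}_{l,m,k}$ of $L^{2}(\S^{d};\H)\cong L^{2}(\S^{d})\otimes\H$. Parseval's identity, pointwise in $\omega$, gives the expansion \eqref{Z_decomposition} together with $\sum_{l,m}\norm{a_{l,m}}_{\H}^{2}=\norm{Z(\omega,\cdot)}_{L^{2}(\S^{d};\H)}^{2}$; taking expectations (monotone convergence), $\sum_{l,m}\E\norm{a_{l,m}}_{\H}^{2}=\E\norm{Z}_{L^{2}(\S^{d};\H)}^{2}<\infty$, which in particular bounds each $\E\norm{a_{l,m}}_{\H}^{2}$.

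\emph{Step 2 (the operators $b_{l}$).} This is the core. The mechanism is that the covariance operator $\B$ has the rotation--invariant (zonal) kernel $R(x\cdot y)$, so it preserves each Laplace eigenspace tensored with $\H$ and acts there through a single operator $b_{l}$, independent of the particular harmonic $\Y_{l,m}$ --- the operator analogue of the Funk--Hecke phenomenon (equivalently, Schur's lemma applied to the irreducible degree--$l$ modules). To pin this down, and to read off the properties of $b_{l}$, fix $w,w'\in\H$ and consider the jointly isotropic scalar fields $Z_{w}(x):=\inn{Z(x),w}_{\H}$ and $Z_{w'}$, with cross--covariance $\E[Z_{w}(x)Z_{w'}(y)]=\inn{R(x\cdot y)w',w}_{\H}=:\rho_{w,w'}(x\cdot y)$, a bounded function of $x\cdot y$. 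The integral operator $T$ on $L^{2}(\S^{d})$ with this kernel is Hilbert--Schmidt and self--adjoint (its kernel is symmetric, since $x\cdot y=y\cdot x$), and, being zonal, is diagonalized by the hyperspherical harmonics with eigenvalues constant on each degree: $T\Y_{l,m}=\lambda_{l}(w,w')\Y_{l,m}$ (Funk--Hecke). Fubini for Bochner integrals --- legitimate since $\E\int_{\S^{d}}\norm{Z(x)}_{\H}^{2}\,\d\sigma(x)<\infty$ --- then yields
\[
\E\bigl[\inn{a_{l,m},w}_{\H}\inn{a_{l',m'},w'}_{\H}\bigr]=\lambda_{l}(w,w')\,\delta_{l,l'}\delta_{m,m'}.
\]
The form $\lambda_{l}$ is bilinear, symmetric, and bounded, $|\lambda_{l}(w,w')|\le\E\norm{a_{l,m}}_{\H}^{2}\,\norm{w}_{\H}\norm{w'}_{\H}$, so by Riesz representation there is a bounded self--adjoint $b_{l}$ on $\H$ with $\lambda_{l}(w,w')=\inn{b_{l}w',w}_{\H}$; positivity follows from $\inn{b_{l}w,w}_{\H}=\E\inn{a_{l,m},w}_{\H}^{2}\ge0$, and the trace--class property from $\operatorname{tr}b_{l}=\sum_{k}\E\inn{a_{l,m},e_{k}}_{\H}^{2}=\E\norm{a_{l,m}}_{\H}^{2}<\infty$ (monotone convergence). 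As $w,w'$ were arbitrary, $\E[a_{l,m}\otimes_{\H}a_{l',m'}]=b_{l}\,\delta_{l,l'}\delta_{m,m'}$ as an operator identity, which is \eqref{b_definition}; in particular $b_{l}$ is independent of $m$ and $\E\norm{a_{l,m}}_{\H}^{2}=\operatorname{tr}b_{l}$.

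\emph{Step 3 (modes of convergence).} Put $S_{L}(x):=\sum_{l=0}^{L}\sum_{m=1}^{h(l)}a_{l,m}\Y_{l,m}(x)$. Using \eqref{b_definition}, the identity $\operatorname{tr}(u\otimes_{\H}v)=\inn{u,v}_{\H}$, and the addition theorem $\sum_{m=1}^{h(l)}\Y_{l,m}(x)^{2}=h(l)/\sigma(\S^{d})$,
\[
\E\norm{Z(x)-S_{L}(x)}_{\H}^{2}=\sum_{l>L}\operatorname{tr}(b_{l})\sum_{m=1}^{h(l)}\Y_{l,m}(x)^{2}=\frac{1}{\sigma(\S^{d})}\sum_{l>L}h(l)\operatorname{tr}(b_{l}),
\]
which is independent of $x$ and is the tail of the convergent series $\sum_{l}h(l)\operatorname{tr}(b_{l})=\sum_{l,m}\E\norm{a_{l,m}}_{\H}^{2}=\E\norm{Z}_{L^{2}(\S^{d};\H)}^{2}$; hence it tends to $0$ as $L\to\infty$, giving the uniform claim, and integrating over $\S^{d}$ (or applying Step 1 directly) gives the $L^{2}(\S^{d};\H)$ convergence.

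\emph{Main obstacle.} Steps 1 and 3 are essentially bookkeeping once the diagonal structure is in hand; the crux is Step 2, i.e.\ converting geodesic isotropy into the precise statement that $\E[a_{l,m}\otimes_{\H}a_{l',m'}]$ vanishes off the diagonal and equals one $m$--free trace--class operator $b_{l}$ on the diagonal. Projecting onto the scalar fields $Z_{w}$ is what makes this accessible, but two points need care: (i) one must invoke that zonal integral operators on $L^{2}(\S^{d})$ are diagonalized by the hyperspherical harmonics with eigenvalues \emph{constant on each degree} --- the Funk--Hecke step, which is precisely the source of the $m$--independence of $b_{l}$; and (ii) the bilinear forms $\lambda_{l}$ must be shown bounded in $(w,w')$, so that the $b_{l}$ exist as genuine trace--class operators rather than merely as quadratic forms. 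Everything else is a routine transcription of the scalar argument (or of \cite{caponera}) with scalars replaced by trace--class operators, together with standard Bochner--Fubini and trace--expectation interchanges.
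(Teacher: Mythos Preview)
The paper does not supply its own proof of this theorem: it states the result and defers to \cite{caponera} for $d=2$, asserting that the general-$d$ case follows by the same steps. Your proposal carries out exactly that program---Bochner well-definedness of the $a_{l,m}$, Funk--Hecke/Schur diagonalization of the zonal cross-covariance to obtain the $m$-free trace-class $b_l$, and the addition theorem for the uniform tail bound---and is correct; the only cosmetic point is that in Step~3 the displayed tail identity implicitly uses $\E[Z(x)\otimes_{\H}a_{l,m}]=b_l\,\Y_{l,m}(x)$ for the cross term, which follows immediately from Fubini and the same Funk--Hecke step you already invoked.
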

As an immediate consequence of representation (\ref{Z_decomposition}) and the orthogonality relations (\ref{b_definition}) we have the decomposition of the covariance function $R({\cdot,\cdot})$: 
\begin{equation}\label{R_harmonics}
    \begin{split}
        R(x,y) &= \E [Z(x)\otimes_{\H}Z(y)] 
        \\
        &= \sum_{\substack {l \geq 0\\ l' \geq 0}} \sum_{\substack {1\leq m \leq h(l)\\ 1\leq m' \leq h(l')}} \E[a_{l,m}\otimes_{\H}a_{l',m'}]\Y_{l,m}(x)\Y_{l,m}(y) 
        \\
        &= \sum_{l\geq 0} \sum_{m=1}^{h(l)} b_l \Y_{l,m}(x)\Y_{l,m}(y), \qquad {x,y \in \S^d.}
    \end{split}
\end{equation}
If we recall the summation identity for the spherical harmonics in terms of Gegenbauer  polynomials \citep{berg2017schoenberg}, $$C_l^{(d-1)/2}(x \cdot y) = \sum_{m=1}^{h(l)} \Y_{l,m}(x)\Y_{l,m}(y), \qquad x,y \in \S^d, $$ and then plug into the last identity in (\ref{R_harmonics}), the dependence of $R$ on $x \cdot y$ becomes {apparent:}
\begin{equation}\label{R_gegenbauer}
    R(x,y) = \sum_{l \geq 0} b_l C_l^{(d-1)/2}(x \cdot y),\qquad {x,y \in \S^d.}
\end{equation}
This identity reveals much more, as it is the natural extension of Schoenberg theorem \citep{schoenberg1942} to the functional case. Extending the notations from \cite{berg2017schoenberg} and \cite{daley-porcu}, it seems natural to call the sequence $\{b_l\}_{l \ge 0}$ a $d$-functional Schoenberg sequence whose elements map $\H$ onto itself and are additionally trace class. \par 
We now turn our attention into 
the covariance operator 
$\B = \E[Z \otimes_{L^2(\S^d;\H)} Z]$. 
Such operator acts on functions $f \in L^2(\S^d;\H)$ as
\begin{equation}
    \begin{split}
        (\B f)(x) = \E[\inn{f,Z}_{L^2(\S^d;\H)}Z](x) &= \E \left[ \int_{\S^d} \inn{f(y),Z(y)}_{\H} \d \sigma(y) Z(x) \right]
        \\ 
        &= \int_{\S^d} \E[\inn{f(y),Z(y)}_{\H}Z(x)] \d \sigma(y)
        \\
        &= \int_{\S^d} \E[(Z(x) \otimes_{\H} Z(y)) f(y)] \d \sigma(y)
        \\
        &= \int_{\S^d} R(x,y)f(y) \d \sigma(y), \qquad x \in \S^d.
    \end{split}
\end{equation}
The above chain of identities can then be used in concert with (\ref{R_harmonics}) to get
\begin{equation}\label{B_schoenberg}
\begin{split}
    (\B f)(x) &= \int_{\S^d} \sum_{l\geq 0}\sum_{m=1}^{h(l)} b_l \Y_{l,m}(x)\Y_{l,m}(y)f(y) \d \sigma(y)
    \\
    &= \sum_{l \geq 0}b_l \sum_{m=1}^{h(l)}\int_{\S^d} \Y_{l,m}(y)f(y) \d \sigma(y) \Y_{l,m}(x)
    \\
    &= \sum_{l\geq 0} b_l (P_lf) (x), \qquad x \in \S^d,
\end{split}
\end{equation} 
where $$ ( P_lf ) (x)= \sum_{m=1}^{h(l)}\inn{f,\Y_{l,m}}_{L^2(\S^d)}\Y_{l,m}(x), \qquad x \in \S^d.$$ If $f \in L^2(\S^d)$, $P_l$ is simply the orthogonal projection of $f$ onto ${\bf H}_l := \text{ span }\{ \Y_{l,m} \}_{m=1}^{h(l)}$. If $f \in L^2(\S^d;\H)$, which is the case in the last sum above, then $P_l$ acts as the same projection done on each component $f_k \in L^2(\S^d)$ of $f = (f_1,f_2,\ldots)$ when decomposed in coordinates of an orthonormal basis of $\H$. 


Hence, we have $P_lP_{l'} = P_l \delta_{l,l'}$, $P_{l'}\Y_{l,m} = \delta_{l,l'}\Y_{l,m}$ for $l\geq 0, m=1,\ldots,h(l)$ and:
\begin{equation}\label{l2norm}
    \norm{f}_{L^2(\S^d;\H)}^2 = \sum_{l {\ge 0 }} \norm{P_lf}_{L^2(\S^d;\H)}^2.
\end{equation}
Additionally, since $\Y_{l,m}$ are scalar functions, we have $b_lP_lf = P_lb_lf$.

\section{Results} \label{sec3}

\subsection{A Functional Feldman-Hajek Form}
Now consider two such random fields $Z^{(j)}$ with associated sequences of operators $\{ b_l^{(j)}\}_{l \geq 0}$, $j=1,2$, which we assume to be all strictly positive. We are interested in necessary and sufficient conditions for their respective distributions ${\bf P}_1$ and ${\bf P}_2$ to be equivalent (one being absolutely continuous with respect to the other) in terms of $\{b_l^{(j)}\}_{l\geq 0}$, $j=1,2$. 
The result following subsequently extends the condition (\ref{arafat-condition}) to the case where $Z$ is a functional Gaussian field as previously described. We anticipate that we provide a constructive proof that is a consequence of 
Feldman-Hájek Theorem \citep{feldman,Hajek1958}, which in turn requires that $\B_2^{-1/2}\B_1 \B_2^{-1/2} - I$ is a Hilbert-Schmidt operator on $L^2(\S^d;\H)$. We start by studying the representation for the operator $\B_2^{-1/2}$. In the scalar case ($\H = \R$), the coefficients $b_l \in \R$ are the eigenvalues associated with eigenbasis $\Y_{l,m}$ and for $\alpha \in \R$ we have $B^{\alpha}f = \sum_{l \geq 0} b_l^{\alpha}P_lf$, provided that the sum converges in the $L^2$ norm. Below we show ann analogous decomposition holds.

{
\begin{proposition}\label{B_inverse}
    Let $\B$ admit a decomposition as in (\ref{B_schoenberg}). Then, the domain of $\B^{-1/2}$ is given by: 
    $$D(\B^{-1/2}) := R(\B^{1/2}) = \left \{ u \in L^2(\S^d;\H) ; \sum_{l\geq 0} \norm{b_l^{-1/2} P_l u}_{L^2(\S^d;\H)}^2 < \infty \right \},$$
and $\B^{-1/2}$ admits the representation
\begin{equation}\label{B_inverse_schoenberg}
    \B^{-1/2}u = \sum_{l \geq 0} b_l^{-1/2}P_l u.
\end{equation}
\end{proposition}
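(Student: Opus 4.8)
The plan is to identify the non-negative square root $\B^{1/2}$ explicitly and then read off $\B^{-1/2}$ from the definition of the operator inverse recalled in the \textbf{Notation} paragraph. The structural fact driving everything is that the ranges $R(P_l)\subseteq L^2(\S^d;\H)$ are mutually orthogonal and together span the space (by \eqref{l2norm}), that each $b_l$ commutes with every $P_{l'}$ (immediate from the description of $P_{l'}$ as acting coordinatewise in $\H$), and that $\B$ acts within each block $R(P_l)$: identifying $R(P_l)$ with $\bigoplus_{m=1}^{h(l)}\H$ through the coefficient map $f\mapsto(\inn{f,\Y_{l,m}}_{L^2(\S^d)})_{m=1}^{h(l)}$, the restriction of $\B$ to $R(P_l)$ is $b_l$ applied coordinatewise. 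Thus $\B$ is block diagonal with blocks $b_l$ of multiplicity $h(l)$.

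First I would introduce $Tf:=\sum_{l\ge0}b_l^{1/2}P_lf$. Since $b_l^{1/2}P_lf=P_lb_l^{1/2}f\in R(P_l)$, the summands are pairwise orthogonal, so the series converges and $\norm{Tf}_{L^2(\S^d;\H)}^2=\sum_l\norm{b_l^{1/2}P_lf}_{L^2(\S^d;\H)}^2\le(\sup_l\norm{b_l})\norm{f}_{L^2(\S^d;\H)}^2$, with $\sup_l\norm{b_l}\le\norm{\B}<\infty$; hence $T$ is bounded on all of $L^2(\S^d;\H)$. Using $P_lP_{l'}=\delta_{l,l'}P_l$ and $b_l^{1/2}P_{l'}=P_{l'}b_l^{1/2}$, a direct computation gives $T^2f=\sum_l b_lP_lf=\B f$, while $\inn{Tf,g}_{L^2(\S^d;\H)}=\sum_l\inn{b_l^{1/2}P_lf,P_lg}_{L^2(\S^d;\H)}=\inn{f,Tg}_{L^2(\S^d;\H)}$ and $\inn{Tf,f}_{L^2(\S^d;\H)}=\sum_l\inn{b_l^{1/2}P_lf,P_lf}_{L^2(\S^d;\H)}\ge0$. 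Thus $T$ is a non-negative self-adjoint square root of $\B$, so $T=\B^{1/2}$ by uniqueness. Finally $T$ is injective: $Tf=0$ forces $b_l^{1/2}P_lf=0$ for all $l$, hence $P_lf=0$ for all $l$ (each $b_l^{1/2}$ is injective since $b_l$ is strictly positive), hence $f=0$; so $\B^{-1/2}=T^{-1}$ is well-defined with $D(\B^{-1/2})=R(T)$.

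It remains to compute $T^{-1}$. If $u=Tf\in R(T)$, then $P_lu=b_l^{1/2}P_lf$, so $P_lu$ lies in the (coordinatewise) range of $b_l^{1/2}$ and $b_l^{-1/2}P_lu=P_lf$; therefore $\sum_l\norm{b_l^{-1/2}P_lu}_{L^2(\S^d;\H)}^2=\sum_l\norm{P_lf}_{L^2(\S^d;\H)}^2=\norm{f}_{L^2(\S^d;\H)}^2<\infty$ and $\sum_l b_l^{-1/2}P_lu=\sum_l P_lf=f=T^{-1}u$. Conversely, if $u\in L^2(\S^d;\H)$ satisfies $P_lu\in D(b_l^{-1/2})$ for every $l$ and $\sum_l\norm{b_l^{-1/2}P_lu}_{L^2(\S^d;\H)}^2<\infty$, set $g_l:=b_l^{-1/2}P_lu\in R(P_l)$; the $g_l$ are pairwise orthogonal and square-summable, so $f:=\sum_l g_l$ converges in $L^2(\S^d;\H)$ with $P_lf=g_l$, and $Tf=\sum_l b_l^{1/2}g_l=\sum_l b_l^{1/2}b_l^{-1/2}P_lu=\sum_l P_lu=u$. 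Hence $u\in R(T)$ and $T^{-1}u=f=\sum_l b_l^{-1/2}P_lu$, which establishes both the domain identity and \eqref{B_inverse_schoenberg}.

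The point needing the most care is the coordinatewise handling of the \emph{unbounded} operators $b_l^{-1/2}$: since $\H$ is infinite-dimensional and each $b_l$ is compact, $D(b_l^{-1/2})=R(b_l^{1/2})$ is a proper dense subspace of $\H$, so one must make precise how $b_l^{-1/2}$ acts on a block $R(P_l)\cong\bigoplus_{m=1}^{h(l)}\H$, note that the set displayed in the statement implicitly includes the requirement $P_lu\in D(b_l^{-1/2})$ for each $l$, and keep the identities $b_l^{-1/2}b_l^{1/2}=I$ on $D(b_l^{-1/2})$ and $b_l^{1/2}b_l^{-1/2}=I$ on $R(b_l^{1/2})$ straight. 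Everything else is routine bookkeeping with the relations $P_lP_{l'}=\delta_{l,l'}P_l$ and $b_lP_l=P_lb_l$ already recorded in the text.
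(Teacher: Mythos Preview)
Your proof is correct and follows essentially the same route as the paper: identify $\B^{1/2}$ as $\sum_{l\ge0}b_l^{1/2}P_l$ by squaring, then characterize $R(\B^{1/2})$ and the action of $\B^{-1/2}$ via the relations $P_lu=b_l^{1/2}P_lf$ in both directions. In fact you are more careful than the paper in several respects --- you verify boundedness, self-adjointness, and injectivity of $T$ explicitly, and you flag the domain subtlety for the unbounded $b_l^{-1/2}$ --- whereas the paper's proof leaves these points implicit.
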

\begin{proof}
    We {first} verify that $\B^{1/2} = \sum_{l \geq 0}b_l^{1/2} P_l$. Indeed, direct inspection proves
    \begin{equation*}
        \Big(\sum_{l\geq 0}b_l^{1/2} P_l\Big)^2 = \sum_{l,l' \geq 0}b_l^{1/2}b_{l'}^{1/2}P_lP_{l'}=\sum_{l,l' \geq 0}b_l^{1/2}b_{l'}^{1/2}P_l\delta_{l,l'} = \sum_{l \geq 0} b_l P_l =\B.
    \end{equation*}
To characterize $D(\B^{-1/2})$, we first take $u \in D(\B^{-1/2}) = R(\B^{1/2})$. Thus, there {exists} an {element} $f \in L^2(\S^d;\H)$ such that $$u = \B^{1/2}f = \sum_{l \geq 0}b_l^{1/2}P_lf = \sum_{l \geq 0}P_lb_l^{1/2}f.$$ Thus, for any $l \ge0$, $P_l u =  P_l b_l^{1/2} f$, and hence $b_l^{-1/2}P_l u = P_l f$, from which we obtain $$\sum_{l \geq 0} \norm{b_l^{-1/2} P_l u}_{L^2(\S^d;\H)}^2 = \sum_{l \geq 0} \norm{P_l f}_{L^2(\S^d;\H)}^2 = \norm{f}_{L^2(\S^d;\H)}^2 < +\infty.$$\\
Conversely, if $u \in L^2(\S^d;\H)$ is such that $\sum_{l\geq 0} \norm{b_l^{-1/2} P_l u}_{L^2(\S^d;\H)}^2 < +\infty$, then $f := \sum_{l\geq 0}b_l^{-1/2} P_l u$ is well defined, and using the representation for $\B^{1/2}$ above we get:
\begin{align*}
    \B^{1/2}f &= \sum_{l' \geq 0} b_{l'}^{1/2}P_{l'}f = \sum_{l,l' \geq 0}b_{l'}^{1/2}b_l^{-1/2}P_{l'}P_l u = \sum_{l,l' \geq 0}b_{l'}^{1/2}b_l^{-1/2}P_l \delta_{l,l'}u \\&= \sum_{l \geq 0}b_l^{1/2}b_l^{-1/2}P_l u = \sum_{l \geq 0}P_l u = u.
\end{align*}
Thus $u \in D(\B^{-1/2})$, with $\B^{-1/2}u = f$ satisfying (\ref{B_inverse_schoenberg}), which concludes the proof.
\end{proof}}
The characterization above for the operator $\B^{1/2}$ is the crux of the argument for providing equivalence conditions for Gaussian measures associated with $\H$-valued fields. 
{To state the following result, we introduce some additional notation. Throughout, $I$ is the identity operator on $\H$.}
\begin{theorem}
    For $j=1,2$, let $Z^{(j)}$ be an isotropic $\H$-valued Gaussian random field defined on $\S^d$, where its distribution ${\bf P}_j$ has covariance operator $\B_j$ with its associated sequence of operators $\{b_l^{(j)} \}_{l \geq 0}$ according to (\ref{B_schoenberg}). Then, ${\bf P}_1 \equiv {\bf P}_2$ if and only if:
\begin{equation}\label{equiv_condition}
        \sum_{l \geq 0}h(l) \norm{(b_l^{(2)})^{-1/2}b_l^{(1)}(b_l^{(2)})^{-1/2} - I}_{\L_2(\H)}^2 < +\infty.
    \end{equation}

\end{theorem}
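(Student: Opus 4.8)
The plan is to reduce the equivalence question, via the Feldman–H\'ajek theorem, to verifying that $\B_2^{-1/2}\B_1\B_2^{-1/2} - I$ is Hilbert–Schmidt on $L^2(\S^d;\H)$, and then to compute its Hilbert–Schmidt norm explicitly using the spectral decomposition (\ref{B_schoenberg}) and Proposition \ref{B_inverse}. First I would recall that since both $\B_1$ and $\B_2$ have zero mean, the Feldman–H\'ajek criterion says ${\bf P}_1 \equiv {\bf P}_2$ if and only if (a) $R(\B_1^{1/2}) = R(\B_2^{1/2})$, (b) the operator $T := \B_2^{-1/2}\B_1\B_2^{-1/2} - I$ (initially defined on a dense domain) extends to a bounded operator on $L^2(\S^d;\H)$, and (c) this extension $T$ is Hilbert–Schmidt; moreover, once the fields are genuinely isotropic and strictly positive in each frequency, the range condition (a) will follow from the summability condition (\ref{equiv_condition}) rather than being a separate hypothesis, so the core of the argument is the Hilbert–Schmidt computation.

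The key computational step is this. Using $\B_j^{-1/2}u = \sum_{l\ge 0}(b_l^{(j)})^{-1/2}P_l u$ and $\B_1 = \sum_{l\ge 0} b_l^{(1)}P_l$, together with $P_lP_{l'} = \delta_{l,l'}P_l$ and the commutation $b_l P_l = P_l b_l$, I would show that $T$ is block-diagonal with respect to the orthogonal decomposition $L^2(\S^d;\H) = \bigoplus_{l\ge0}\big(\mathbf{H}_l\otimes\H\big)$, acting on the $l$-th block as
\begin{equation*}
    T\big|_{\mathbf{H}_l\otimes\H} = I_{\mathbf{H}_l}\otimes\Big((b_l^{(2)})^{-1/2}b_l^{(1)}(b_l^{(2)})^{-1/2} - I\Big).
\end{equation*}
Since $\dim \mathbf{H}_l = h(l)$, the Hilbert–Schmidt norm of the $l$-th block equals $h(l)\,\norm{(b_l^{(2)})^{-1/2}b_l^{(1)}(b_l^{(2)})^{-1/2} - I}_{\L_2(\H)}^2$ (each operator $(b_l^{(2)})^{-1/2}b_l^{(1)}(b_l^{(2)})^{-1/2}-I$ is indeed Hilbert–Schmidt on $\H$, being a difference of the identity and a product of the trace-class operator $b_l^{(1)}$ with bounded — in fact Hilbert–Schmidt — factors). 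Summing over $l$ gives $\norm{T}_{\L_2(L^2(\S^d;\H))}^2 = \sum_{l\ge0} h(l)\norm{(b_l^{(2)})^{-1/2}b_l^{(1)}(b_l^{(2)})^{-1/2}-I}_{\L_2(\H)}^2$, so $T$ is Hilbert–Schmidt precisely when (\ref{equiv_condition}) holds.

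The remaining items are: (i) justifying that the formal manipulations above are legitimate — in particular that the densely defined $T$ really does extend boundedly and that one may pass freely between $\B_2^{-1/2}\B_1\B_2^{-1/2}$ and the block form, which requires care because $\B_2^{-1/2}$ is unbounded; I would handle this by working on the dense set of finite spherical-harmonic expansions, establishing the block identity there, and noting that boundedness of each block plus square-summability of their Hilbert–Schmidt norms forces a bounded (indeed Hilbert–Schmidt) closure; and (ii) checking the range condition (a), which I expect to follow from (\ref{equiv_condition}): the summability forces $\norm{(b_l^{(2)})^{-1/2}b_l^{(1)}(b_l^{(2)})^{-1/2}-I}_{\L_2(\H)}\to 0$, hence $(b_l^{(2)})^{-1/2}b_l^{(1)}(b_l^{(2)})^{-1/2}$ is eventually invertible with uniformly bounded inverse, which yields $R(\B_1^{1/2}) = R(\B_2^{1/2})$ via the domain characterization in Proposition \ref{B_inverse}. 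The main obstacle is precisely this operator-theoretic bookkeeping around the unbounded $\B_2^{-1/2}$ — ensuring the block-diagonalization, the domain equality, and the Hilbert–Schmidt identification all hold rigorously rather than just formally; the algebraic heart of the computation, by contrast, is a short consequence of the projection identities.
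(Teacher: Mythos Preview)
Your approach is essentially identical to the paper's: both invoke Feldman--H\'ajek, use Proposition~\ref{B_inverse} to write $T = \B_2^{-1/2}\B_1\B_2^{-1/2} - I$ in block-diagonal form $\sum_l d_l P_l$ with $d_l = (b_l^{(2)})^{-1/2}b_l^{(1)}(b_l^{(2)})^{-1/2} - I$, and then compute $\norm{T}_{\L_2}^2$ via the orthonormal basis $e_k\Y_{l,m}$ (equivalently, via your tensor-product description $I_{{\bf H}_l}\otimes d_l$, which gives the factor $h(l)$ directly). The paper, like you, effectively folds the Feldman--H\'ajek range condition into the Hilbert--Schmidt requirement rather than treating it separately.

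One correction: your parenthetical claiming that each $d_l$ is automatically Hilbert--Schmidt is wrong on both counts. The factor $(b_l^{(2)})^{-1/2}$ is \emph{unbounded}, not ``bounded --- in fact Hilbert--Schmidt'': since $b_l^{(2)}$ is trace-class its eigenvalues tend to zero, so those of $(b_l^{(2)})^{-1/2}$ tend to infinity. And even if $(b_l^{(2)})^{-1/2}b_l^{(1)}(b_l^{(2)})^{-1/2}$ were compact, subtracting the identity would produce a \emph{non}-compact operator when $\dim\H=\infty$. There is in fact no a priori reason for $d_l\in\L_2(\H)$; finiteness of each term is simply part of what condition~(\ref{equiv_condition}) asserts. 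The argument does not use this parenthetical, so just delete it.
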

\begin{proof}
    By combining the representations for $\B_1^{1/2}$ and $\B_2^{-1/2}$ as in Theorem \ref{B_inverse}, we obtain
    \begin{equation}
        D:=\B_2^{-1/2}\B_1 \B_2^{-1/2} - I = \sum_{l\geq 0} [(b_l^{(2)})^{-1/2}b_l^{(1)}(b_l^{(2)})^{-1/2} - I] P_l.
    \end{equation}
    By Feldman-Hájek Theorem, ${\bf P}_1 \equiv {\bf P}_2$ {if and only if}  $D$ is a Hilbert-Schmidt operator on $L^2(\S^d;\H)$. To conclude the proof we hence need to verify that the Hilbert-Schmidt norm of $D$ is given by the left hand side of (\ref{equiv_condition}). \par   Let $\psi_{k,l,m} := e_k \Y_{l,m} \in L^2(\S^d;\H)$, where $\{ e_k\}_{k \geq 0}$ is an orthonormal basis of $\H$. Since $\Y_{l,m}$ is an orthonormal basis of $L^2(\S^d)$, it follows that ${\{\psi_{k,l,m}\}} $ is an orthonormal basis of $L^2(\S^d;\H)$. Denoting $d_l := (b_l^{(2)})^{-1/2}b_l^{(1)}(b_l^{(2)})^{-1/2} - I$, we can write:
    \begin{equation}
        \begin{split}
            \norm{D}_{\L_2(L^2(\S^d;\H))}^2 &= \sum_{k,l,m} \norm{D\psi_{k,l,m}}_{L^2(\S^d;\H)}^2
            \\
            &= \sum_{k,l,m}\norm{\sum_{l'} (d_{l'} P_{l'}) (e_k \Y_{l,m})}_{ L^2(\S^d;\H)}^2
            \\
            &= \sum_{k,l,m}\norm{\sum_{l'} (d_{l'} e_k) \delta_{l,l'}\Y_{l,m}}_{L^2(\S^d;\H)}^2 \qquad (\text{ because } P_{l'} \Y_{l,m} = \delta_{l,l'}\Y_{l,m})
            \\
            &=\sum_{k,l,m} \norm{(d_l e_k)\Y_{l,m}}_{L^2(\S^d;\H)}^2
            \\
            &=\sum_{k,l,m}\int_{\S^d}\norm{(d_l e_k) \Y_{l,m}(x)}_{\H}^2\d \sigma(x)
            \\
            &=\sum_{l,m}\int_{\S^d}\Y_{l,m}(x)^2\d\sigma(x)\sum_{k}\norm{d_l e_k}_{\H}^2
            \\
            &=\sum_{l\geq 0}\sum_{m=1}^{h(l)}\norm{d_l}_{\L_2(\H)}^2 \quad (\text{using} \norm{\Y_{l,m}}_{L^2(\S^d)}=1 \text{ and the definition of } \norm{\cdot}_{\L_2(\H)})
            \\
            &=\sum_{l \geq 0} h(l)\norm{(b_l^{(2)})^{-1/2}b_l^{(1)}(b_l^{(2)})^{-1/2} - I}_{\L_2(\H)}^2.
        \end{split}
    \end{equation}
This concludes the proof.
\end{proof}

\subsection{Scalar Marginalization}

{Next, we discuss how to relate the problems of equivalence between the vector fields $Z_1$ and $Z_2$ with the equivalence of its scalar components $\inn{Z_1,u}_{\H}$ and $\inn{Z_2,u}_{\H}$, $u \in \H$. For that purpose 
$Z_u := \inn{Z,u}_{\H}$. Its covariance function $R_u$ at any two points $x,y \in \R^d$  is given by
\begin{equation*}
    \begin{split}
        R_u(x,y) = \E[Z_u(x)Z_u(y)] &= \E[\inn{Z(x),u}_{\H}\inn{u,Z(y)}_{\H}]
        \\
        &= \E[\inn{Z(x)\otimes_{\H}Z(y)u,u}_{\H}]
        \\
        &= \inn{R(x,y)u,u}_{\H}.
    \end{split}
\end{equation*}
The covariance operator $\B_u$ of $Z_u$ acts on scalar functions $f \in L^2(\S^d)$ as
\begin{equation*}
    \begin{split}
        ( \B_uf ) (x) = \int_{\S^d} R_u(x,y)f(y) \d \sigma(y) &= \int_{\S^d}\inn{R(x,y)u,u}_{\H}f(y) \d \sigma(y)
        \\
        &= \inn{\int_{\S^d}R(x,y)uf(y) \d \sigma(y), u}_{\H}
        \\
        &= \inn{\B(uf)(x),u}_{\H}.
    \end{split}
\end{equation*}
Using the series representation of $\B$ from (\ref{B_schoenberg}) we get
\begin{equation}\label{Bu_schoenberg}
    \B_uf = \inn{\sum_{l \geq 0}b_l P_l(uf),u}_{\H} = \sum_{l \geq 0} \inn{b_l u,u}_{\H}P_l f,
\end{equation}
which shows that the scalar Schoenberg coefficients of $\B_u$ are $\inn{b_lu,u}$, where $b_l$ are the operator valued Schoenberg coefficients of $\B$. 
Let us now consider a pair of such random fields $Z_u^{(j)}$, $j=1,2$ as defined above. By using (\ref{Bu_schoenberg}) we get that the operator $D_u := [(\B_u^{(2)})^{1-2} \B_u^{(1)}(\B_u^{(2)})^{1-2} - I]$ can be written as:
\begin{equation*}
    D_u = \sum_{l \geq 0} \left( \frac{\inn{b_l^{(2)}u,u}_{\H}}{\inn{b_l^{(1)}u,u}_{\H}} - 1 \right) P_l.
\end{equation*}
Thus, the necessary and sufficient condition for the distributions $\bf{P}_u^{(j)}$ of $Z_u^{(j)}$ to be equivalent is 
\begin{equation}\label{equiv_condition2}
    \sum_{l \geq 0} h(l) \left( \frac{\inn{b_l^{(2)}u,u}_{\H}}{\inn{b_l^{(1)}u,u}_{\H}} - 1 \right)^2 < +\infty.
\end{equation}
It is straightforward from the definition of equivalence that if $\bf{P}^{(1)} \equiv \bf{P}^{(2)}$ then $\bf{P}_u^{(1)} \equiv \bf{P}_u^{(1)}$ for all $u \in \H$. However, we prove below that the left hand side of (\ref{equiv_condition2}) is bounded by the one in \ref{equiv_condition}, which is useful in the next section when identifying parameters that ensure equivalence of measures in some classes of Gaussian fields. First we prove the following inequality.
\begin{proposition}\label{proposition_norms}
    Let $A,B$ be linear operators on $\H$ such that $B$ is a positive-definite, self-adjoint and compact. Then, for any $u \in \H$,
    \begin{equation}\label{inequality1}
        \left| \frac{\inn{(A-B)u,u}_{\H}}{\inn{Bu,u}_{\H}} \right| \leq \norm{B^{-1/2}AB^{-1/2} - I}_{\L_2(\H)}.
    \end{equation}
\end{proposition}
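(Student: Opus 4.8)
The plan is to convert the left-hand side of (\ref{inequality1}) into a Rayleigh-type quotient for the operator $T:=B^{-1/2}AB^{-1/2}-I$ through the change of variable $v:=B^{1/2}u$, and then to finish with the Cauchy--Schwarz inequality together with the elementary bound $\norm{T}_{\mathrm{op}}\le\norm{T}_{\L_2(\H)}$ (operator norm dominated by Hilbert--Schmidt norm). Two degenerate cases are disposed of first: if $\norm{T}_{\L_2(\H)}=+\infty$ there is nothing to prove, and I take $u\neq 0$ so that $\inn{Bu,u}_{\H}>0$. Hence from now on $T$ is a genuine bounded operator on $\H$.

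Since $B$ is positive-definite, self-adjoint and compact, $B^{1/2}$ is injective with dense range $R(B^{1/2})=D(B^{-1/2})$, so $v=B^{1/2}u\in D(B^{-1/2})$ and $B^{-1/2}v=u$. Then $\inn{Bu,u}_{\H}=\norm{B^{1/2}u}_{\H}^{2}=\norm{v}_{\H}^{2}$, and, writing $u=B^{-1/2}v$ and moving one copy of $B^{-1/2}$ across the inner product by self-adjointness,
\begin{equation*}
\inn{(A-B)u,u}_{\H}=\inn{(B^{-1/2}AB^{-1/2}-I)v,v}_{\H}=\inn{Tv,v}_{\H}.
\end{equation*}
Cauchy--Schwarz then gives $|\inn{Tv,v}_{\H}|\le\norm{Tv}_{\H}\norm{v}_{\H}\le\norm{T}_{\mathrm{op}}\norm{v}_{\H}^{2}\le\norm{T}_{\L_2(\H)}\norm{v}_{\H}^{2}$, and dividing by $\norm{v}_{\H}^{2}=\inn{Bu,u}_{\H}$ produces (\ref{inequality1}).

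The one place needing care — and the step I expect to be the only real obstacle — is the displayed identity, because $B^{-1/2}$ is unbounded: shuttling a factor of $B^{-1/2}$ across $\inn{\cdot,\cdot}_{\H}$ is legitimate only when the vectors involved lie in $D(B^{-1/2})=R(B^{1/2})$, which is a priori unclear for $Au$. I would sidestep this by spectral diagonalization. Write $B=\sum_{j}\lambda_j\,(e_j\otimes e_j)$ with $\lambda_j>0$ and $\{e_j\}$ an orthonormal basis of $\H$, and set $A_{ij}:=\inn{Ae_j,e_i}_{\H}$; then $\norm{T}_{\L_2(\H)}^{2}=\sum_{i,j}\bigl|A_{ij}/\sqrt{\lambda_i\lambda_j}-\delta_{ij}\bigr|^{2}$, so finiteness of the right-hand side of (\ref{inequality1}) says precisely that the matrix $t_{ij}:=A_{ij}/\sqrt{\lambda_i\lambda_j}-\delta_{ij}$ represents a Hilbert--Schmidt operator $T$ in the basis $\{e_j\}$ (and since $B^{1/2}TB^{1/2}$ is then Hilbert--Schmidt, $A=B+B^{1/2}TB^{1/2}$ is in fact bounded, which makes all the series below absolutely convergent). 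Expanding $u=\sum_j u_je_j$ and setting $v_j:=\sqrt{\lambda_j}\,u_j$ (i.e.\ $v=B^{1/2}u$), the relation $u_iu_j=v_iv_j/\sqrt{\lambda_i\lambda_j}$ turns $\inn{(A-B)u,u}_{\H}=\sum_{i,j}(A_{ij}-\lambda_i\delta_{ij})u_iu_j$ into $\sum_{i,j}t_{ij}v_iv_j=\inn{Tv,v}_{\H}$ by a direct computation (first for $u$ in the dense span of the $e_j$, then by continuity), after which the Cauchy--Schwarz estimate above closes the argument.
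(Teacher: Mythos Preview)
Your proof is correct and follows essentially the same route as the paper: both diagonalize $B$ in its eigenbasis, rewrite $\inn{(A-B)u,u}_{\H}$ as $\sum_{i,j}t_{ij}v_iv_j$ with $v_j=\sqrt{\lambda_j}u_j$ and $t_{ij}=A_{ij}/\sqrt{\lambda_i\lambda_j}-\delta_{ij}$, and then apply Cauchy--Schwarz. Your framing via the substitution $v=B^{1/2}u$ and the Rayleigh quotient $\inn{Tv,v}_{\H}/\norm{v}_{\H}^2$ is a bit cleaner conceptually, and you are more explicit than the paper about the degenerate case $\norm{T}_{\L_2(\H)}=+\infty$ and about why the series manipulations are justified when $T$ is Hilbert--Schmidt.
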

\begin{proof}
    Let $\{ e_n\}_{n \geq 1}$ be an orthonormal basis of eigenvectors of $B$, with $Be_n = \lambda_n e_n$. Let $u \in \H$. By writing $u = \sum_{n \geq 0} u_n e_n$, where $u_n = \inn{u,e_n}_{\H}$ we easily get \begin{equation}\label{proposition_identity1}
        \inn{Bu,u}_{\H} = \sum_{n \geq 0} \lambda_n u_n^2.
    \end{equation}
    Next, we bound $\inn{(A-B)u,u}_{\H}$ by doing:
    \begin{equation}
        \begin{split}
            \inn{(A-B)u,u}_{\H} &= \sum_{m,n}u_m u_n (\inn{Ae_m,e_n}_{\H}-\inn{Be_m,e_n}_{\H}) 
            \\
            &= \sum_{m,n}u_m u_n (\inn{Ae_m,e_n}_{\H}-\inn{B^{1/2}e_m,B^{1/2}e_n}_{\H})
            \\
            &= \sum_{m,n}u_m u_n (\inn{Ae_m,e_n}_{\H}- \inn{\lambda_m^{1/2} e_m,\lambda_n^{1/2} e_n}_{\H})
            \\
            &= \sum_{m,n} \lambda_m^{1/2}u_m \lambda_n^{1/2}u_n (\inn{A(\lambda_m^{-1/2} e_m),\lambda_n^{-1/2} e_n}_{\H} - \inn{e_m,e_n}_{\H})
            \\
            &= \sum_{m,n} \lambda_m^{1/2}u_m \lambda_n^{1/2}u_n (\inn{AB^{-1/2}e_m,B^{-1/2}e_n}_{\H} - \inn{e_m,e_n}_{\H})
            \\
            &= \sum_{m,n} \lambda_m^{1/2}u_m \lambda_n^{1/2}u_n \inn{(B^{-1/2}AB^{-1/2} - I) e_m,e_n}_{\H}
            \\
            &\leq \left(\sum_{m,n} \lambda_m u_m^2 \lambda_n u_n^2\right)^{1/2} \left( \sum_{m,n} \inn{(B^{-1/2}AB^{-1/2} - I) e_m,e_n}_{\H}^2 \right)^{1/2}
            \\
            &= \inn{Bu,u}_{\H} \norm{B^{-1/2}AB^{-1/2} - I}_{\L_2(\H)},
        \end{split}
    \end{equation}
where in the identities above we used in the third and fourth lines that $B^{\pm 1/2}u_n = \lambda^{\pm 1/2}u_n $, in the seventh line we used Cauchy-Schwarz for the double sum and in the last line we employed identity (\ref{proposition_identity1}). 
\end{proof} 
A direct implication of the result above is the following. 
\begin{corollary}\label{corollary_inequality}
    Let $Z^{(1)}$ and $Z^{(2)}$ be two {isotropic $\H$-valued} Gaussian random fields {defined on $\mathbb{S}^d$} and $\{ b_l^{(j)} \}_{l \geq 0}$, $j=1,2$ be their respective Schoenberg operator coefficient sequences. 
    Then:
    \begin{equation*}
        \sum_{l \geq 0} h(l) \left( \frac{\inn{b_l^{(2)}u,u}_{\H}}{\inn{b_l^{(1)}u,u}_{\H}} - 1 \right)^2 \leq \sum_{l \geq 0}h(l) \norm{(b_l^{(2)})^{-1/2}b_l^{(1)}(b_l^{(2)})^{-1/2} - I}_{\L_2(\H)}^2.
    \end{equation*}
\end{corollary}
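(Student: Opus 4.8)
The plan is to obtain the corollary by specializing Proposition \ref{proposition_norms} at each degree $l$ and then summing against the nonnegative weights $h(l)$. Fix $u \in \H$ and $l \ge 0$. Each $b_l^{(j)}$ is trace class, hence compact and self-adjoint, and is strictly positive by the standing assumption, so Proposition \ref{proposition_norms} applies to any admissible pair $(A,B)$. To reproduce the summand on the left-hand side of the corollary verbatim, I would take $A = b_l^{(2)}$ and $B = b_l^{(1)}$; then $\inn{(A-B)u,u}_{\H} = \inn{b_l^{(2)}u,u}_{\H} - \inn{b_l^{(1)}u,u}_{\H}$, so the left-hand side of (\ref{inequality1}) becomes exactly $\bigl|\,\inn{b_l^{(2)}u,u}_{\H}/\inn{b_l^{(1)}u,u}_{\H} - 1\,\bigr|$ and the proposition reads
\[
\left| \frac{\inn{b_l^{(2)}u,u}_{\H}}{\inn{b_l^{(1)}u,u}_{\H}} - 1 \right|
\le \norm{(b_l^{(1)})^{-1/2} b_l^{(2)} (b_l^{(1)})^{-1/2} - I}_{\L_2(\H)} .
\]

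Squaring, multiplying by $h(l) \ge 0$ and summing the term-by-term inequality over $l$ would give
\[
\sum_{l \ge 0} h(l) \left( \frac{\inn{b_l^{(2)}u,u}_{\H}}{\inn{b_l^{(1)}u,u}_{\H}} - 1 \right)^{2}
\le \sum_{l \ge 0} h(l)\, \norm{(b_l^{(1)})^{-1/2} b_l^{(2)} (b_l^{(1)})^{-1/2} - I}_{\L_2(\H)}^{2}.
\]
This is a clean domination of the displayed left-hand side, but with the \emph{transposed} right-hand operator $(b_l^{(1)})^{-1/2} b_l^{(2)} (b_l^{(1)})^{-1/2} - I$ rather than the operator $(b_l^{(2)})^{-1/2} b_l^{(1)} (b_l^{(2)})^{-1/2} - I$ appearing in the corollary and in the criterion (\ref{equiv_condition}).

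The hard part is precisely this transposition, and it is where the statement must be read carefully rather than waved through. Writing $\{\mu_k\}_k$ for the eigenvalues of the self-adjoint operator $(b_l^{(2)})^{-1/2} b_l^{(1)} (b_l^{(2)})^{-1/2}$ (the generalized eigenvalues of $b_l^{(1)}$ relative to $b_l^{(2)}$), one has $\norm{(b_l^{(2)})^{-1/2} b_l^{(1)} (b_l^{(2)})^{-1/2} - I}_{\L_2(\H)}^{2} = \sum_k (\mu_k - 1)^2$, whereas the operator produced above has eigenvalues $\{1/\mu_k\}_k$ and Hilbert--Schmidt norm $\sum_k (1/\mu_k - 1)^2$. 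These weights agree to second order as $\mu_k \to 1$ but are not ordered in general, so the displayed right-hand side can be strictly smaller than the left-hand summand: already with $\H = \R$, $b_l^{(1)} = 1$, $b_l^{(2)} = 2$ at a single degree $l$ one finds left-hand summand $=1$ while the displayed right-hand weight is $(\tfrac12 - 1)^2 = \tfrac14$. Hence Proposition \ref{proposition_norms} cannot deliver the corollary in the displayed reciprocal orientation by a term-by-term bound. The resolution I would adopt is to apply Proposition \ref{proposition_norms} instead with $A = b_l^{(1)}$, $B = b_l^{(2)}$, which produces the corollary's right-hand operator directly and the ratio $\inn{b_l^{(1)}u,u}_{\H}/\inn{b_l^{(2)}u,u}_{\H}$, yielding
\[
\sum_{l \ge 0} h(l) \left( \frac{\inn{b_l^{(1)}u,u}_{\H}}{\inn{b_l^{(2)}u,u}_{\H}} - 1 \right)^{2}
\le \sum_{l \ge 0} h(l)\, \norm{(b_l^{(2)})^{-1/2} b_l^{(1)} (b_l^{(2)})^{-1/2} - I}_{\L_2(\H)}^{2}.
\]
This is the orientation consistent with (\ref{equiv_condition}); it shows that functional equivalence of $Z^{(1)}$ and $Z^{(2)}$ dominates, hence implies, equivalence of every scalar projection $\inn{Z,u}_{\H}$, which is exactly the content the corollary is meant to convey and the form needed in the sequel.
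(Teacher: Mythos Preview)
Your approach is exactly the paper's: apply Proposition~\ref{proposition_norms} at each degree $l$ and sum. The paper's one-line proof reads ``apply Lemma~\ref{proposition_norms} to $A = b_l^{(2)}$ and $B = b_l^{(1)}$ to get $\bigl( \inn{b_l^{(2)}u,u}_{\H}/\inn{b_l^{(1)}u,u}_{\H} - 1 \bigr) \le \norm{(b_l^{(2)})^{-1/2}b_l^{(1)}(b_l^{(2)})^{-1/2} - I}_{\L_2(\H)}$ and then perform the sum.'' You have correctly spotted that this is in error: with that choice of $A$ and $B$, Proposition~\ref{proposition_norms} produces $B^{-1/2}AB^{-1/2} = (b_l^{(1)})^{-1/2}b_l^{(2)}(b_l^{(1)})^{-1/2}$ on the right, not the operator printed in the corollary. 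Your scalar counterexample ($\H=\R$, $b_l^{(1)}=1$, $b_l^{(2)}=2$, giving $1$ on the left and $1/4$ on the right) shows the corollary is actually false as stated, so this is not a typographical slip that can be absorbed.

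Your resolution---apply the proposition with $A=b_l^{(1)}$, $B=b_l^{(2)}$ and state the bound with the reciprocal ratio $\inn{b_l^{(1)}u,u}_{\H}/\inn{b_l^{(2)}u,u}_{\H}$ on the left---is the correct repair and is the version consistent with the orientation in~(\ref{equiv_condition}). The only downstream use of the corollary is in the multiquadratic example, where it is invoked with $u=e_i$ to conclude that equivalence of the vector fields forces finiteness of the scalar series $\sum_l h(l)\bigl(b_l^{(2)}(i,i)/b_l^{(1)}(i,i)-1\bigr)^2$; since scalar equivalence is symmetric in the two measures, your corrected inequality (with the ratio inverted) serves that purpose equally well.
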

\begin{proof}
    For each $l\geq 0$ we apply Lemma \ref{proposition_norms} to $A = b_l^{(2)}$ and $B = b_l^{(1)}$ to get $\left( \frac{\inn{b_l^{(2)}u,u}_{\H}}{\inn{b_l^{(1)}u,u}_{\H}} - 1 \right) \leq \norm{(b_l^{(2)})^{-1/2}b_l^{(1)}(b_l^{(2)})^{-1/2} - I}_{\L_2(\H)}$ and then perform the sum.
\end{proof}
} 

{Some interesting facts can be noted by considering the finite dimensional case.} 
Here we highlight as a particular case of the previous result when we take the Hilbert space to be a finite dimensional Euclidean space: $\H = \R^d$. In this case, the coefficients $b_l$ are positive definite $\R^{d \times d}$ matrices. The Hilbert-Schmidt norm $\L_2(\H)$ becomes the familiar Frobenius norm, but since all norms are equivalent in a finite-dimensional space, we can take any of them in the expression \ref{equiv_condition}.

\begin{corollary}
    For $j=1,2$, let $Z^{(j)}$ be isotropic $\R^p$-valued Gaussian random fields defined on $\S^d$ with distributions ${\bf P}_j$, whose covariance functions $\B_j$ admit decomposition \ref{B_schoenberg}. Then ${\bf P}_1 \equiv {\bf P}_2$ if and only if:
    \begin{equation}\label{equiv_condition_matrix}
        \sum_{l \geq 0}h(l) \norm{(b_l^{(2)})^{-1/2}b_l^{(1)}(b_l^{(2)})^{-1/2} - I}^2 < +\infty,
    \end{equation}
    where $\norm{\cdot}$ can be any of the equivalent matrix norms.
\end{corollary}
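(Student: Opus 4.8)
The plan is to obtain this statement as an immediate specialization of the functional Feldman--Hájek theorem proved above to the case $\H=\R^p$. First I would observe that $\R^p$ with the Euclidean inner product is a separable Hilbert space, so all the hypotheses of the main equivalence theorem are satisfied by the fields $Z^{(j)}$: each is an isotropic $\R^p$-valued Gaussian field on $\S^d$, its covariance admits the decomposition (\ref{B_schoenberg}), and the associated Schoenberg operator coefficients $b_l^{(j)}$ are positive semidefinite trace-class operators on $\R^p$. In finite dimensions ``trace class'' is automatic and this reduces to saying that the $b_l^{(j)}$ are positive semidefinite $p\times p$ matrices; the standing assumption that they are strictly positive makes them genuinely positive definite, hence invertible, so that $(b_l^{(j)})^{-1/2}$ is well defined and the quantities in (\ref{equiv_condition}) make sense. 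Applying the theorem then gives ${\bf P}_1\equiv{\bf P}_2$ if and only if $\sum_{l\ge 0} h(l)\,\|(b_l^{(2)})^{-1/2}b_l^{(1)}(b_l^{(2)})^{-1/2}-I\|_{\L_2(\R^p)}^2<+\infty$.

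Second, I would identify $\|\cdot\|_{\L_2(\R^p)}$ with the Frobenius norm. Indeed, for a linear operator $A$ on $\R^p$ with matrix $(a_{ij})$ in the canonical basis $\{e_k\}_{k=1}^p$, the defining formula $\|A\|_{\L_2(\R^p)}^2=\sum_{k=1}^p\|Ae_k\|^2=\sum_{i,j}a_{ij}^2$ is exactly the squared Frobenius norm. Hence the criterion of the main theorem is precisely (\ref{equiv_condition_matrix}) with $\|\cdot\|$ taken to be the Frobenius norm.

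Finally, I would invoke equivalence of norms on the finite-dimensional vector space $\R^{p\times p}$ of $p\times p$ matrices: for any other matrix norm $\|\cdot\|_\star$ there exist constants $0<c\le C<\infty$, depending only on $p$ and on the chosen norm, with $c\,\|M\|_\star\le\|M\|_{\mathrm{Fro}}\le C\,\|M\|_\star$ for every $M\in\R^{p\times p}$. Taking $M=(b_l^{(2)})^{-1/2}b_l^{(1)}(b_l^{(2)})^{-1/2}-I$ and summing against the nonnegative weights $h(l)$, the series $\sum_{l\ge 0}h(l)\|M_l\|_{\mathrm{Fro}}^2$ converges if and only if $\sum_{l\ge 0}h(l)\|M_l\|_\star^2$ does, so the choice of matrix norm in (\ref{equiv_condition_matrix}) is immaterial. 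Chaining the three observations yields the claim. There is essentially no technical obstacle here; the only point deserving a line of care is verifying that the finite-dimensional setting is a bona fide instance of the infinite-dimensional framework (joint measurability, membership of the sample paths in $L^2(\S^d;\R^p)$, and inheritance of the strict-positivity hypothesis on the coefficients), after which everything is bookkeeping.
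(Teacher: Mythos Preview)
Your proof is correct and follows exactly the same approach as the paper: specialize the main functional Feldman--H\'ajek theorem to $\H=\R^p$, identify the Hilbert--Schmidt norm with the Frobenius norm, and then use the equivalence of all norms on the finite-dimensional space $\R^{p\times p}$ to conclude that the choice of matrix norm is immaterial. The paper presents this without a formal proof environment, giving precisely these three observations in the paragraph immediately preceding the corollary.
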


\subsection{Example: Multiquadratic Bivariate Model}
We illustrate the use of the main results in this work by introducing some bivariate families of random fields and finding the parameter conditions for equivalency of their distributions. The following findings are an extension of the work done for the scalar case in \cite{arafat2018equivalence}. We start by discussing the multiquadratic family, introduced in \cite{gneiting2013}.\\

Throughout, we abuse of notation ad we write $R(x,y)$, for $R: \S^d \times \S^d \to \R$, as $\psi(\cos \theta)$, for $\psi:[0,\pi] \to \R$ and $\theta$ being the arccosine of the inner product between $x$ and $y$.

\begin{proposition}[Multiquadratic bivariate covariance]\label{prop:multiquadratic}
    Let $\varphi_M(\theta|\alpha,\rho,\sigma)$ be the matrix valued function with entries: 
    \begin{equation}\label{multiquadratic}
        \varphi_{i,j,M}(\theta|\alpha,\rho,\sigma) = \rho_{i,j} \sigma_i \sigma_j  \frac{(1-\alpha_{i,j})^2}{\left ( 1+ \alpha_{i,j}^2 - 2\alpha_{i,j} \cos \theta \right )}   , \qquad i,j=1,2, \quad \theta \in [0,\pi],
    \end{equation}
where $\rho_{1,1} = \rho_{2,2} = 1$, $\rho_{1,2} = \rho_{2,1} \in (0,1).$ and $\alpha_{i,j} \in (0,1)$, $\alpha_{1,2} = \alpha_{2,1}$. If the parameters $(\alpha,\rho,\sigma)$ satisfy the conditions:
\begin{equation}\label{eq:corr_cond}
    \begin{split}
        &\alpha_{1,2} \leq \sqrt{\alpha_{1,1}\alpha_{2,2}},
        \\
        &\rho_{1,2} < \left( \frac{(1-\alpha_{1,1})(1-\alpha_{2,2})}{(1-\alpha_{1,2})^2} \right)^{(d-1)/2},
    \end{split}
\end{equation}
then the function $R(x,y) = \varphi_M(\theta|\alpha,\rho,\sigma)$ is a valid correlation function for an isotropic Gaussian random field $Z = (Z_1, Z_2)$ on $\S^d$.
\end{proposition}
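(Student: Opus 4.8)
The plan is to establish positive semidefiniteness of the kernel $R(x,y) = \varphi_M(\theta|\alpha,\rho,\sigma)$ by computing its $d$-Schoenberg expansion componentwise and then checking that the resulting matrix coefficients $B_l$ are positive semidefinite under the stated conditions. The starting point is the classical generating-function identity for Gegenbauer polynomials,
\begin{equation*}
    \frac{1-\alpha^2}{1+\alpha^2 - 2\alpha t} = \sum_{l \ge 0} \frac{2l+d-1}{d-1}\,\alpha^{l}\,\frac{C_l^{(d-1)/2}(t)}{C_l^{(d-1)/2}(1)}\cdot c_{l,d},
\end{equation*}
or more precisely the known fact (see \cite{gneiting2013}, \cite{berg2017schoenberg}) that the Poisson-type kernel $(1-\alpha^2)/(1+\alpha^2-2\alpha\cos\theta)$ has $d$-Schoenberg coefficients proportional to $\alpha^l$ up to positive normalizing constants depending only on $l$ and $d$. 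Writing $\beta_l(\alpha) > 0$ for these coefficients, the entry $\varphi_{i,j,M}$ has expansion $\varphi_{i,j,M}(\theta) = \rho_{i,j}\sigma_i\sigma_j \frac{(1-\alpha_{i,j})^2}{1-\alpha_{i,j}^2}\sum_{l\ge 0}\beta_l(\alpha_{i,j}) C_l^{(d-1)/2}(\cos\theta)$, so that
\begin{equation*}
    (B_l)_{i,j} = \rho_{i,j}\sigma_i\sigma_j\, \frac{1-\alpha_{i,j}}{1+\alpha_{i,j}}\, \beta_l(\alpha_{i,j}).
\end{equation*}
By Schoenberg's matrix-valued theorem (equation \eqref{schoenberg-2} and \cite{Hannan:1980}), $R$ is a valid isotropic covariance if and only if every $B_l$ is positive semidefinite and $\{B_l\}$ is summable; summability is immediate since each entry is a convergent series, so the whole content is positive semidefiniteness of the $2\times 2$ matrices $B_l$.

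Next I would reduce positive semidefiniteness of each $B_l$ to its $2\times 2$ determinant being nonnegative (the diagonal entries are manifestly positive). Since $\rho_{1,1}=\rho_{2,2}=1$, the condition $\det B_l \ge 0$ becomes
\begin{equation*}
    \rho_{1,2}^2 \le \frac{\dfrac{1-\alpha_{1,1}}{1+\alpha_{1,1}}\beta_l(\alpha_{1,1})\cdot\dfrac{1-\alpha_{2,2}}{1+\alpha_{2,2}}\beta_l(\alpha_{2,2})}{\left(\dfrac{1-\alpha_{1,2}}{1+\alpha_{1,2}}\beta_l(\alpha_{1,2})\right)^2},
\end{equation*}
uniformly in $l$. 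This is where the two hypotheses in \eqref{eq:corr_cond} enter. The explicit form of $\beta_l(\alpha)$ is, up to an $l$-dependent positive constant that cancels in the ratio, equal to $\alpha^l$ times a factor; consequently the $l$-dependent part of the right-hand side is a constant multiple of $(\alpha_{1,1}\alpha_{2,2})^l/\alpha_{1,2}^{2l}$, which is bounded below uniformly in $l \ge 0$ precisely when $\alpha_{1,2}^2 \le \alpha_{1,1}\alpha_{2,2}$ — the first condition — with the infimum attained as $l \to \infty$ (or at $l=0$, depending on sign conventions; I would check both endpoints). The $l$-independent part contributes the factor $\frac{(1-\alpha_{1,1})(1-\alpha_{2,2})}{(1-\alpha_{1,2})^2}$ raised to an appropriate power; tracking the normalizing constants and the $\frac{1-\alpha}{1+\alpha}$ prefactors against $C_l^{(d-1)/2}(1)$ yields the exponent $(d-1)/2$ appearing in the second condition. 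Thus the worst case over $l$ reproduces exactly the bound $\rho_{1,2} < \left(\frac{(1-\alpha_{1,1})(1-\alpha_{2,2})}{(1-\alpha_{1,2})^2}\right)^{(d-1)/2}$.

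The main obstacle I anticipate is bookkeeping of the normalization constants: the precise form of the $d$-Schoenberg coefficients of the Poisson kernel involves factors like $C_l^{(d-1)/2}(1) = \binom{l+d-2}{l}$, the ratio $(2l+d-1)/(d-1)$, and the relation between the "Gegenbauer-normalized" and "probabilist-normalized" expansions, and getting the exponent $(d-1)/2$ to come out exactly right requires care — a misplaced constant would shift the power. I would handle this by first doing the scalar ($p=1$, single-$\alpha$) sanity check to pin down $\beta_l(\alpha)$ against the known result in \cite{arafat2018equivalence} or \cite{gneiting2013}, then propagate. A secondary (minor) point is verifying that $\beta_l(\alpha) > 0$ strictly for all $l$, which is needed to divide by it; this is clear from the generating-function identity since all Gegenbauer coefficients of the Poisson kernel are positive for $\alpha \in (0,1)$. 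Once the coefficients are correctly identified, the argument is a one-line determinant inequality checked uniformly in $l$, and the sufficiency direction of the proposition follows; the conditions are in fact also necessary, which the same computation shows by exhibiting the binding $l$.
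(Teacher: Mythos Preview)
Your proposal is correct and follows essentially the same route as the paper: expand each entry in its $d$-Schoenberg series (the paper simply cites \cite{arafat2018equivalence} for the closed form $b_n(i,j)=\rho_{i,j}\sigma_i\sigma_j\binom{d+n-2}{n}\alpha_{i,j}^n(1-\alpha_{i,j})^{d-1}$, which resolves the bookkeeping you flag), then check $\det b_n>0$ uniformly in $n$ and take the infimum over $n$. The binding index is $n=0$ under the first hypothesis $\alpha_{1,2}^2\le\alpha_{1,1}\alpha_{2,2}$, which yields the second condition on $\rho_{1,2}$ directly.
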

\begin{proof}
    From \cite{arafat2018equivalence} we see that the Schoenberg coefficients of $R$ are the $2\times 2$ matrices $b_n$ with entries 
    \begin{equation}
        b_n(i,j) = \rho_{i,j}\sigma_i \sigma_j \binom{d+n-2}{n}\alpha_{i,j}^n (1-\alpha_{i,j})^{d-1}.
    \end{equation} 
In order to ensure that $R$ is a correlation functions we need each $\{b_n\}_{n\geq 0}$ to be positive definite, which happens if and only if $b_n(1,1) > 0$ and $\det(b_n) > 0$ for all $n \geq 0$. Indeed, $b_n(1,1) = \sigma_1^2\binom{d+n-2}{n}\alpha_{1,1}^n (1-\alpha_{1,1})^{d-1} > 0$. Furthermore:
\begin{equation*}
    \begin{split}
        &\det(b_n) > 0
        \\
        &\iff \binom{d+n-2}{n}\sigma_1^2 \sigma_2^2 \Bigg ( \alpha_{1,1}^n \alpha_{2,2}^n (1-\alpha_{1,1})^{d-1} (1-\alpha_{2,2})^{d-1} \\ & \qquad \qquad - \rho_{1,2}^2\alpha_{1,2}^{2n}  (1-\alpha_{1,2})^{2(d-1)} \Bigg) > 0
        \\
        &\iff \rho_{1,2} <\left( \frac{\alpha_{1,1}\alpha_{2,2}}{\alpha_{1,2}^2}\right)^{n/2} \left( \frac{(1-\alpha_{1,1})(1-\alpha_{2,2})}{(1-\alpha_{1,2})^2} \right)^{(d-1)/2}.
    \end{split}
\end{equation*}
Thus we must have $\rho_{1,2} < \inf_{n \geq 0} \left( \frac{\alpha_{1,1}\alpha_{2,2}}{\alpha_{1,2}^2}\right)^{n/2} \left(\frac{(1-\alpha_{1,1})(1-\alpha_{2,2})}{(1-\alpha_{1,2})^2} \right)^{(d-1)/2}$. Observe that if $\alpha_{1,2}^2 > \alpha_{1,1}\alpha_{2,2}$, then the infimum is zero and we get $\rho_{1,2} = 0$. This is the trivial case where $R$ is diagonal and the corresponding field $Z$ has independent components $Z_1$ and $Z_2$, which we are not considering here. Now, if we assume $\alpha_{1,2}^2 \leq \alpha_{1,1}\alpha_{2,2}$, then the expression inside the infimum is non-decreasing in $n$ and is minimized at $n=0$. Therefore $\rho_{1,2} < \left( \frac{(1-\alpha_{1,1})(1-\alpha_{2,2})}{(1-\alpha_{1,2})^2} \right)^{(d-1)/2}$ ensures that all $b_n$ are positive-definite, which completes the proof. 
\end{proof}

\begin{proposition}
Let $Z^{(j)}$, $j=1,2$, be two {$\mathbb{R}^2$-valued isotropic} Gaussian random fields {defined on $\mathbb{S}^d$} with respective multiquadratic correlations $\varphi(\theta|\alpha^{(j)},\rho^{(j)},\sigma^{(j)})$ as defined in Proposition \ref{prop:multiquadratic}, which satisfy \eqref{eq:corr_cond}. Then, they have equivalent distributions if and only if one of the following holds:
\begin{itemize}
    \item $\sigma^{(1)} = \sigma^{(2)}$, $\alpha_{i,i}^{(1)} = \alpha_{i,i}^{(2)} =: \alpha_{i,i}$ and $\alpha^{(i)}_{1,2} < \sqrt{\alpha_{1,1}\,\alpha_{2,2}}$, for $i=1,2$.
    \item $(\alpha^{(1)}, \rho^{(1)},\sigma^{(1)}) = (\alpha^{(2)}, \rho^{(2)},\sigma^{(2)})$, and $\alpha^{(i)}_{1,2} = \sqrt{\alpha_{1,1}\,\alpha_{2,2}}$ for at least one $i\in\{1,2\}$.
\end{itemize}
\end{proposition}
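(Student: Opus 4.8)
The plan is to reduce the matrix criterion \eqref{equiv_condition_matrix} to the asymptotics of a single scalar series, in two stages. First I would pin down the ``diagonal'' parameters by scalar marginalisation. Writing $Z_{e_i}^{(j)} := \inn{Z^{(j)},e_i}_{\R^2}$ for the $i$th component field, Corollary~\ref{corollary_inequality} shows that $\mathbf P_1\equiv\mathbf P_2$ forces \eqref{equiv_condition2} to hold for $u=e_1$ and $u=e_2$. For $u=e_i$ one has $\inn{b_n^{(j)}e_i,e_i}_{\R^2}=(\sigma_i^{(j)})^2\binom{d+n-2}{n}(\alpha_{ii}^{(j)})^n(1-\alpha_{ii}^{(j)})^{d-1}$, so the binomial cancels in the ratio and \eqref{equiv_condition2} becomes the scalar multiquadratic summability of \cite{arafat2018equivalence}: the $n$th term is $h(n)$ times a constant multiple of $\big((\alpha_{ii}^{(1)}/\alpha_{ii}^{(2)})^{\pm n}-1\big)^2$, and since $h(n)>0$ with $\sum_n h(n)=\infty$, this is finite only if $\alpha_{ii}^{(1)}=\alpha_{ii}^{(2)}$ and then the constant is $1$, i.e. $\sigma_i^{(1)}=\sigma_i^{(2)}$, for $i=1,2$. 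Hence in every case I may assume $\sigma^{(1)}=\sigma^{(2)}$ and $\alpha_{ii}^{(1)}=\alpha_{ii}^{(2)}=:\alpha_{ii}$, so that the diagonal entries of $b_n^{(1)}$ and $b_n^{(2)}$ coincide and only the off-diagonal entry still depends on $j$.

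Secondly, I would factor out the common scalar $\binom{d+n-2}{n}$ (which cancels inside $(b_n^{(2)})^{-1/2}b_n^{(1)}(b_n^{(2)})^{-1/2}$) and work with the $2\times2$ matrices $M_n^{(j)}$ having common diagonal entries $a=\sigma_1^2(1-\alpha_{11})^{d-1}\alpha_{11}^n$, $c=\sigma_2^2(1-\alpha_{22})^{d-1}\alpha_{22}^n$ and off-diagonal entry $b_j=p_j(\alpha_{12}^{(j)})^n$, with $p_j:=\rho_{12}^{(j)}\sigma_1\sigma_2(1-\alpha_{12}^{(j)})^{d-1}>0$. Since $(M_n^{(2)})^{-1/2}M_n^{(1)}(M_n^{(2)})^{-1/2}$ is symmetric positive definite with the same eigenvalues $\mu_1,\mu_2$ as $(M_n^{(2)})^{-1}M_n^{(1)}$, the summand of \eqref{equiv_condition_matrix} is $\norm{d_n}_{\L_2(\R^2)}^2=(\mu_1-1)^2+(\mu_2-1)^2=(T_n-2)^2-2(D_n-T_n+1)$, where $T_n=\mathrm{tr}((M_n^{(2)})^{-1}M_n^{(1)})$ and $D_n=\det M_n^{(1)}/\det M_n^{(2)}$. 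Because only the off-diagonal entry varies between $M_n^{(1)}$ and $M_n^{(2)}$, a short $2\times2$ computation gives $T_n-2=2b_2(b_2-b_1)/(ac-b_2^2)$ and $D_n-1=(b_2^2-b_1^2)/(ac-b_2^2)$, hence
\begin{equation*}
  \norm{d_n}_{\L_2(\R^2)}^2=\frac{2(b_1-b_2)^2(ac+b_2^2)}{(ac-b_2^2)^2}.
\end{equation*}

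Finally, writing $ac=K(\alpha_{11}\alpha_{22})^n$ with $K>0$ and using the admissibility bound $(\alpha_{12}^{(2)})^2\le\alpha_{11}\alpha_{22}$ from \eqref{eq:corr_cond} (cf. Proposition~\ref{prop:multiquadratic}), one gets $\det M_n^{(2)}=ac-b_2^2\ge c_0(\alpha_{11}\alpha_{22})^n$ for some $c_0>0$ and all large $n$ — in the boundary case $(\alpha_{12}^{(2)})^2=\alpha_{11}\alpha_{22}$ this is exactly the strict inequality for $\rho_{12}^{(2)}$ in \eqref{eq:corr_cond} — while $ac+b_2^2$ is of the same order $(\alpha_{11}\alpha_{22})^n$. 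Hence, up to two-sided positive constants, $\norm{d_n}_{\L_2(\R^2)}^2\asymp (\alpha_{11}\alpha_{22})^{-n}(b_1-b_2)^2=\big(p_1 r_1^{n/2}-p_2 r_2^{n/2}\big)^2$ with $r_j:=(\alpha_{12}^{(j)})^2/(\alpha_{11}\alpha_{22})\in(0,1]$. Since $h(n)$ grows only polynomially and $\sum_n h(n)=\infty$, criterion \eqref{equiv_condition_matrix} reduces to $\sum_n h(n)\big(p_1 r_1^{n/2}-p_2 r_2^{n/2}\big)^2<\infty$: this holds when $r_1,r_2<1$ (three polynomial-times-geometric series); it fails when exactly one $r_j$ equals $1$ (the bracket tends to $\pm p_j\ne0$); and when $r_1=r_2=1$ the bracket equals the constant $(p_1-p_2)^2$, so it holds iff $p_1=p_2$, i.e. $\rho_{12}^{(1)}=\rho_{12}^{(2)}$. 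Reading off $r_j<1\iff\alpha_{12}^{(j)}<\sqrt{\alpha_{11}\alpha_{22}}$ and $r_j=1\iff\alpha_{12}^{(j)}=\sqrt{\alpha_{11}\alpha_{22}}$, and combining with the constraints from the first paragraph, I obtain exactly the two alternatives in the statement (the first alternative is $r_1,r_2<1$ together with $\sigma^{(1)}=\sigma^{(2)}$, $\alpha_{ii}^{(1)}=\alpha_{ii}^{(2)}$; the second forces $\alpha_{12}^{(1)}=\alpha_{12}^{(2)}=\sqrt{\alpha_{11}\alpha_{22}}$ and $\rho_{12}^{(1)}=\rho_{12}^{(2)}$, hence equality of all parameters).

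The step I expect to be the main obstacle is extracting the sharp threshold $\sqrt{\alpha_{11}\alpha_{22}}$ from the exact identity for $\norm{d_n}^2$. The naive estimate $\norm{d_n}\le\norm{(M_n^{(2)})^{-1}}_{\mathrm{op}}\norm{M_n^{(1)}-M_n^{(2)}}$ carries the condition number of $M_n^{(2)}$, which blows up geometrically as soon as $\alpha_{11}\ne\alpha_{22}$, and therefore yields only the strictly weaker (incorrect) sufficient condition $\alpha_{12}^{(j)}<\min(\alpha_{11},\alpha_{22})$. It is precisely the cancellation that produces $ac=K(\alpha_{11}\alpha_{22})^n$ — rather than $\min(a,c)$ — in the denominator of the exact formula that reveals the correct decay rate; resolving the tie $r_1=r_2=1$ then rests on the strict positivity $\rho_{12}^{(j)}\in(0,1)$ built into the model.
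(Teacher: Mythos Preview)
Your proof is correct and follows essentially the same architecture as the paper's: first pin down the diagonal parameters via scalar marginalisation and the Arafat et al.\ result, then reduce the matrix criterion to an exact closed form for the Frobenius norm of $d_n$ exploiting that $M_n^{(1)}$ and $M_n^{(2)}$ share their diagonal, and finally do a case analysis on whether $\alpha_{12}^{(j)}$ hits the threshold $\sqrt{\alpha_{11}\alpha_{22}}$. Your eigenvalue/trace--determinant computation and the paper's trace-of-square computation yield the identical formula (yours is the paper's $2(\xi_l^{(2)}-\xi_l^{(1)})^2(1+(\xi_l^{(1)})^2)/(1-(\xi_l^{(1)})^2)^2$ after dividing numerator and denominator by $(ac)^2$, with the roles of $1$ and $2$ swapped). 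The one genuine simplification is in the convergence step: where the paper invokes Raabe's test to handle the sub-threshold case $r_1,r_2<1$, you observe directly that $h(n)$ is polynomial and the summand is dominated by a geometric, which is both shorter and more transparent.
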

\begin{proof} 
We see that if $Z^{(1)}, Z^{(2)}$ have equivalent distributions, then Corollary \ref{corollary_inequality} implies:
    \begin{equation*}
        \sum_{l \geq 0} h(l) \left( \frac{b_l^{(2)}(i,i)}{b_l^{(1)}(i,i)} - 1 \right)^2 < +\infty, \quad \text{ for $i=1,2$},
    \end{equation*}
    that is, the marginals are equivalent.
However, in \cite{arafat2018equivalence} it is shown that a necessary and sufficient condition for this sum to be finite is that $\sigma_i^{(1)} = \sigma_i^{(2)}$ and $\alpha_{i,i}^{(1)} = \alpha_{i,i}^{(2)}=:\alpha_{i,i}$, $i=1,2$.
Under these constraints, $b_l^{(2)}$ can be written in terms of $b_l^{(1)}$ and the difference between the off-diagonal terms, i.e.,
$$
b_l^{(2)} = b_l^{(1)} + (b_l^{(2)}(1,2) - b_l^{(1)}(1,2)) \overline{I}, \qquad \text{with } \overline{I}= \begin{bmatrix}
0 & 1\\
1& 0
\end{bmatrix}.
$$
Hence, we can write
\begin{align*}
    \norm{(b_l^{(1)})^{-1/2}b_l^{(2)}(b_l^{(1)})^{-1/2} - I}^2 &= (b_l^{(2)}(1,2) - b_l^{(1)}(1,2))^2\norm{(b_l^{(1)})^{-1/2}\overline{I}(b_l^{(1)})^{-1/2}}^2.
\end{align*}
The matrix $(b_l^{(1)})^{-1/2}\overline{I}(b_l^{(1)})^{-1/2}$ is symmetric and, if we consider the Frobenius norm, we can write
\begin{align*}
\norm{(b_l^{(1)})^{-1/2}\overline{I}(b_l^{(1)})^{-1/2}}^2 &= \operatorname{trace}\left ( (b_l^{(1)})^{-1/2}\overline{I}(b_l^{(1)})^{-1/2} (b_l^{(1)})^{-1/2}\overline{I}(b_l^{(1)})^{-1/2}\right)\\
&= \operatorname{trace}\left ( (b_l^{(1)})^{-1}\overline{I} (b_l^{(1)})^{-1}\overline{I}\right)\\
&= 2 \frac{b_l^{(1)}(1,1)b_l^{(1)}(2,2) + (b_l^{(1)}(1,2))^2  }{(b_l^{(1)}(1,1)b_l^{(1)}(2,2) - (b_l^{(1)}(1,2))^2)^2} ,
\end{align*}
since
$$
(b_l^{(1)})^{-1}\overline{I} (b_l^{(1)})^{-1}\overline{I} = \frac{1}{\left(\operatorname{det}(b_l^{(1)})\right)^2}\begin{bmatrix}
    - b_l^{(1)}(1,2) & b_l^{(1)}(2,2)\\ 
b_l^{(1)}(1,1)& -b_l^{(1)}(1,2)
\end{bmatrix}
\begin{bmatrix}
    - b_l^{(1)}(1,2) & b_l^{(1)}(2,2)\\ 
b_l^{(1)}(1,1)& -b_l^{(1)}(1,2)
\end{bmatrix},
$$
and $\operatorname{det}(b_l^{(1)})=b_l^{(1)}(1,1)b_l^{(1)}(2,2) - (b_l^{(1)}(1,2))^2>0$. Thus,
\begin{align*}
    \norm{(b_l^{(1)})^{-1/2}b_l^{(2)}(b_l^{(1)})^{-1/2} - I}^2
    &=2 (\xi_l^{(2)} - \xi^{(1)}_l )^2 \frac{ 1 + (\xi^{(1)}_l)^2 }{(1 - (\xi^{(1)}_l)^2)^2} ,
\end{align*}
with $$\xi^{(j)}_l := \frac{b_l^{(j)}(1,2)}{\sqrt{b_l^{(j)}(1,1)b_l^{(j)}(2,2)}}, \qquad j=1,2.$$ Note that each $\xi^{(j)}_l$ is either a strictly positive constant in $l$ or tends to $0$, depending on whether $\alpha^{(j)}_{1,2} = \sqrt{\alpha_{1,1}\,\alpha_{2,2}}$ or $\alpha^{(j)}_{1,2} < \sqrt{\alpha_{1,1}\,\alpha_{2,2}}$, respectively.
As a consequence, equivalence holds, i.e.,
$$
\sum_{l=0}^\infty h(l)\norm{(b_l^{(1)})^{-1/2}b_l^{(2)}(b_l^{(1)})^{-1/2} - I}^2 < +\infty,
$$
if and only if $\sigma_i^{(1)} = \sigma_i^{(2)}$ and $\alpha_{i,i}^{(1)} = \alpha_{i,i}^{(2)} =:\alpha_{i,i}$, $i=1,2$, and
$$
\sum_{l=0}^\infty h(l) (\xi_l^{(2)} -\xi_l^{(1)})^2 = \sum_{l=0}^\infty h(l) \left (\frac{\xi_l^{(2)}}{\xi_l^{(1)}}-1 \right)^2 (\xi_l^{(1)})^2 < +\infty.
$$
If $\alpha^{(1)}_{1,2} = \alpha^{(2)}_{1,2}$ and $\rho^{(1)}_{1,2} =\rho^{(2)}_{1,2}$, clearly the series converges. Moreover, we can exclude the cases in which $\alpha^{(j)}_{1,2} = \sqrt{\alpha_{1,1}\,\alpha_{2,2}}$ for at least one $j\in\{1,2\}$, since it is readily seen that $(\xi_l^{(2)} -\xi_l^{(1)})^2$ dos not converge to zero, unless $\alpha^{(1)}_{1,2} = \alpha^{(2)}_{1,2}$ and $\rho^{(1)}_{1,2} =\rho^{(2)}_{1,2}$.

Now assume \(\alpha^{(1)}_{1,2} < \sqrt{\alpha_{1,1}\,\alpha_{2,2}}\) and \(\alpha^{(2)}_{1,2} < \sqrt{\alpha_{1,1}\,\alpha_{2,2}}\), and consider $\alpha^{(1)}_{1,2} \ne \alpha^{(2)}_{1,2}$, regardless of $\rho^{(1)}_{1,2}$ and $\rho^{(2)}_{1,2}$.
To study the behavior of the series, we use Raabe's test which reduces to study the following limit
\begin{align*}\label{raabe}
 L:= \lim_{l \to +\infty}l \left (\frac{h(l)}{h(l+1)} \frac{\left( \frac{\rho^{(2)}_{1,2}}{\rho^{(1)}_{1,2}} \left(\frac{\alpha^{(2)}_{1,2}}{\alpha^{(1)}_{1,2}}  \right)^{l} \left (\frac{1-\alpha^{(2)}_{1,2}}{1-\alpha^{(1)}_{1,2}}  \right)^{d-1} -1\right)^2}{\left( \frac{\rho^{(2)}_{1,2}}{\rho^{(1)}_{1,2}} \left( \frac{\alpha^{(2)}_{1,2}}{\alpha^{(1)}_{1,2}}  \right)^{l+1} \left (\frac{1-\alpha^{(2)}_{1,2}}{1-\alpha^{(1)}_{1,2}}  \right)^{d-1} -1 \right)^2 }\left(\frac{\sqrt{\alpha_{1,1}\alpha_{2,2}}}{\alpha_{1,2}^{(1)}}\right)^2  -1 \right).
\end{align*}
Observe that
$$
\frac{h(l)}{h(l+1)} = \frac{(2l + d - 1)(l + 1)}{(2l + d + 1)(l + d - 1)} = 1 - \frac{d-1}{l} + O(l^{-2}),\qquad l\to\infty,
$$
and 
\begin{eqnarray*}
    && \lim_{l\to \infty} \frac{\left( \frac{\rho^{(2)}_{1,2}}{\rho^{(1)}_{1,2}} \left(\frac{\alpha^{(2)}_{1,2}}{\alpha^{(1)}_{1,2}}  \right)^{l} \left (\frac{1-\alpha^{(2)}_{1,2}}{1-\alpha^{(1)}_{1,2}}  \right)^{d-1} -1\right)^2}{\left( \frac{\rho^{(2)}_{1,2}}{\rho^{(1)}_{1,2}} \left( \frac{\alpha^{(2)}_{1,2}}{\alpha^{(1)}_{1,2}}  \right)^{l+1} \left (\frac{1-\alpha^{(2)}_{1,2}}{1-\alpha^{(1)}_{1,2}}  \right)^{d-1} -1 \right)^2 }\left(\frac{\sqrt{\alpha_{1,1}\alpha_{2,2}}}{\alpha_{1,2}^{(1)}}\right)^2 \\ &=& 
\begin{cases}
    \left(\frac{\sqrt{\alpha_{1,1}\alpha_{2,2}}}{\alpha_{1,2}^{(1)}}\right)^2 & \text{if } \alpha^{(2)}_{1,2} < \alpha^{(1)}_{1,2}\\
       \left(\frac{\sqrt{\alpha_{1,1}\alpha_{2,2}}}{\alpha_{1,2}^{(2)}}\right)^2 & \text{if } \alpha^{(1)}_{1,2} < \alpha^{(2)}_{1,2}
\end{cases}.
\end{eqnarray*}

Hence, $L = +\infty$ and the series converges.
Similarly it holds for the case $\alpha^{(1)}_{1,2} = \alpha^{(2)}_{1,2}$ and $\rho^{(1)}_{1,2} \ne \rho^{(2)}_{1,2}$, and the proof is concluded.
\end{proof}

\subsection{Example: Legendre-Matérn Model}
We consider now the use of the results on a infinite-dimensional example. For this purpose,
let $L^2([0,1])$ be the space of square integrable periodic functions on $[0,1]$. We choose then the $Z^{(j)}$'s to be isotropic $L^2([0,1])$-valued random fields, i.e., $\H = L^2([0,1])$.

We additionally model the coefficients $a^{(j)}_{l,m}$'s as real stationary processes on $L^2([0,1])$, in order to obtain the following expansion for the corresponding covariance operators
$$
b^{(j)}_l   = \sum_{k = -\infty}^\infty \gamma^{(j)}_{l,k} \, e^{i2\pi k\cdot} \otimes  e^{i 2\pi k \cdot}.
$$
The $\gamma^{(j)}_{l,k}$'s are real numbers such that $\gamma^{(j)}_{l,k} = \gamma^{(j)}_{l,-k}$,
$$
\gamma^{(j)}_{l,k} > 0, \qquad  \sum_{l\ge 0} h(l) \sum_{k=-\infty}^\infty \gamma^{(j)}_{l,k} =  \sum_{l\ge 0} h(l) \left( \gamma^{(j)}_{l,0} +2 \sum_{k=1}^\infty \gamma^{(j)}_{l,k}\right) < \infty .
$$
Then, the equivalence condition is given by
$$
\sum_{l \ge 0 } \sum_{k \ge 0} h(l) \left ( \frac{\gamma^{(2)}_{l,k}}{\gamma^{(1)}_{l,k}} -1  \right)^2 < +\infty.
$$
We restrict our attention to the 2-dimensional sphere $\mathbb{S}^2$ and consider an extension of the Legendre-Matérn covariance function (see \cite{GuinnessFuentes2016}) by specifying
\begin{equation}\label{legendre-matérn}
    h(l)\gamma_{l,k} = \frac{\sigma^2}{(\alpha + k^2 + l^{2})^{\nu + 1/2}}
\end{equation}
with $\alpha,\nu, \sigma > 0$. 
\begin{proposition}
Let $Z^{(j)}$, $j=1,2$, be two isotropic $ L^2([0,1])$-valued Gaussian random fields on $\mathbb{S}^2$ with Legendre-Matérn coefficients \eqref{legendre-matérn} of parameters $\alpha^{(j)}, \nu^{(j)},\sigma^{(j)}>0$, $j=1,2$. Then, they have equivalent distributions if and only if $\sigma^{(1)} = \sigma^{(2)}$ and $\nu^{(1)} = \nu^{(2)}$.
\end{proposition}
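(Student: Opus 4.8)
The plan is to reduce the equivalence question to the convergence of an explicit numerical double series, and then settle that convergence by elementary comparison estimates. First, since $\H=L^{2}([0,1])$ and each operator $b_{l}^{(j)}$ is diagonal in the Fourier basis with eigenvalues $\gamma^{(j)}_{l,k}$, the functional criterion \eqref{equiv_condition} reduces, for these diagonal operators, to
\[
S:=\sum_{l\ge0}\sum_{k\ge0}h(l)\Bigl(\frac{\gamma^{(2)}_{l,k}}{\gamma^{(1)}_{l,k}}-1\Bigr)^{2}<\infty .
\]
On $\S^{2}$ one has $h(l)=2l+1$, and because the Legendre--Mat\'ern coefficients satisfy $h(l)\gamma^{(j)}_{l,k}=(\sigma^{(j)})^{2}(\alpha^{(j)}+k^{2}+l^{2})^{-\nu^{(j)}-1/2}$, the multiplicity factor cancels in the ratio, leaving
\[
\frac{\gamma^{(2)}_{l,k}}{\gamma^{(1)}_{l,k}}
=\frac{(\sigma^{(2)})^{2}}{(\sigma^{(1)})^{2}}\;
\frac{(\alpha^{(1)}+k^{2}+l^{2})^{\nu^{(1)}+1/2}}{(\alpha^{(2)}+k^{2}+l^{2})^{\nu^{(2)}+1/2}} .
\]

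For necessity I would test $S$ on its diagonal. A convergent series of nonnegative terms indexed by $\mathbb{Z}_{\ge0}^{2}$ has convergent subseries, in particular the one over $k=l=n$, so its terms must tend to $0$. Writing $t_{n}$ for the ratio above at $k=l=n$, one has $t_{n}=\bigl((\sigma^{(2)})^{2}/(\sigma^{(1)})^{2}\bigr)\,(2n^{2})^{\nu^{(1)}-\nu^{(2)}}\,(1+O(n^{-2}))$ as $n\to\infty$. If $\nu^{(1)}\neq\nu^{(2)}$ then $t_{n}\to0$ or $t_{n}\to+\infty$, so $(t_{n}-1)^{2}$ stays bounded away from $0$ and $(2n+1)(t_{n}-1)^{2}\to+\infty$; if $\nu^{(1)}=\nu^{(2)}$ but $\sigma^{(1)}\neq\sigma^{(2)}$ then $t_{n}\to(\sigma^{(2)}/\sigma^{(1)})^{2}\neq1$ and again $(2n+1)(t_{n}-1)^{2}\to+\infty$. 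In both cases $S=\infty$; hence $\sigma^{(1)}=\sigma^{(2)}$ and $\nu^{(1)}=\nu^{(2)}$ are necessary.

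For sufficiency, assume $\nu^{(1)}=\nu^{(2)}=:\nu$ and $\sigma^{(1)}=\sigma^{(2)}$, set $\mu:=\nu+1/2$ and $r^{2}:=k^{2}+l^{2}$, so the summand equals $(2l+1)\bigl((\tfrac{\alpha^{(1)}+r^{2}}{\alpha^{(2)}+r^{2}})^{\mu}-1\bigr)^{2}$. Since $\bigl|\tfrac{\alpha^{(1)}+r^{2}}{\alpha^{(2)}+r^{2}}-1\bigr|=\tfrac{|\alpha^{(1)}-\alpha^{(2)}|}{\alpha^{(2)}+r^{2}}\le\tfrac{|\alpha^{(1)}-\alpha^{(2)}|}{r^{2}}$ and $|x^{\mu}-1|\le C_{\mu}|x-1|$ for $x$ near $1$, there exist $r_{0}$ and $C$ with $\bigl((\tfrac{\alpha^{(1)}+r^{2}}{\alpha^{(2)}+r^{2}})^{\mu}-1\bigr)^{2}\le C\,r^{-4}$ for $r\ge r_{0}$; using $2l+1\le 3r$, the summand is then at most $3C\,(k^{2}+l^{2})^{-3/2}$. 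Because $\sum_{(l,k)\in\mathbb{Z}_{\ge0}^{2},\,r\ge r_{0}}(k^{2}+l^{2})^{-3/2}<\infty$ (compare with $\int\!\!\int_{r\ge r_{0}}r^{-3}\,r\,dr\,d\theta$; the exponent $3$ exceeds the planar dimension $2$) and the finitely many terms with $r<r_{0}$ are bounded, $S<\infty$, i.e.\ ${\bf P}_{1}\equiv{\bf P}_{2}$. Combining the two implications gives the stated characterisation.

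Neither direction is deep. The only points needing care are the necessity bookkeeping --- confirming that the diagonal subseries alone diverges in each offending regime, with the right direction of the limiting ratio --- and, in the sufficiency part, noting that the multiplicity weight $h(l)=2l+1$ is comfortably dominated by the $(k^{2}+l^{2})^{-2}$ decay coming from the $\alpha$-perturbation, so that the resulting two-dimensional series converges with no need for a finer criterion once $\nu$ and $\sigma$ are matched.
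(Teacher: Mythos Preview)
Your proof is correct and follows essentially the same route as the paper's: reduce \eqref{equiv_condition} to the scalar double series $\sum_{l,k} h(l)\bigl(\gamma^{(2)}_{l,k}/\gamma^{(1)}_{l,k}-1\bigr)^{2}$, dispose of necessity by exhibiting a subseries whose terms do not vanish, and handle sufficiency via the first-order expansion $\bigl(\tfrac{\alpha^{(1)}+r^{2}}{\alpha^{(2)}+r^{2}}\bigr)^{\mu}-1=O(r^{-2})$ together with $h(l)=2l+1\lesssim r$. The only cosmetic difference is that the paper tests necessity on the slice $k=0$ whereas you use the diagonal $k=l=n$; both choices work for the same reason.
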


\begin{proof}
The equivalence condition for $Z^{(1)}$ and $Z^{(2)}$ under the Legendre-Matérn model is given by
$$
 \sum_{l \ge 0 }  \sum_{k \ge 0 } h(l) \left( \left(\frac{\sigma^{(2)}}{\sigma^{(1)}} \right)^2\frac{(\alpha^{(1)} + k^2 + l^{2})^{\nu^{(1)} + 1/2}}{(\alpha^{(2)} + k^2 +l^{2})^{\nu^{(2)} + 1/2}} -1\right)^2 < \infty.
$$
Clearly, if $\sigma^{(1)}=\sigma^{(2)}, \ \alpha^{(1)}=\alpha^{(2)}, \ \nu^{(1)}=\nu^{(2)}$, the series converges. When $\nu^{(1)}\ne\nu^{(2)}$, the series diverges: indeed, by considering for instance the case $k=0$, we observe that
$$
\left ( \frac{\gamma^{(2)}_{l,0}}{\gamma^{(1)}_{l,0}} -1  \right)^2 = \left( \left(\frac{\sigma^{(2)}}{\sigma^{(1)}} \right)^2\frac{(\alpha^{(1)} + l^{2})^{\nu^{(1)} + 1/2}}{(\alpha^{(2)} + l^{2})^{\nu^{(2)} + 1/2}} -1\right)^2
$$
does not converge to 0 as $l\to \infty$. Similarly, if $\nu^{(1)}=\nu^{(2)}$ and $\sigma^{(1)}\ne\sigma^{(2)}$.

As a consequence, we restrict our attention to the the case $\sigma^{(1)}=\sigma^{(2)}$ and $ \nu^{(1)}=\nu^{(2)}=:\nu$, with $\alpha^{(1)}\ne\alpha^{(2)},$ which gives
$$
\frac{\gamma^{(2)}_{l,k}}{\gamma^{(1)}_{l,k}} = \left (\frac{\alpha^{(1)}+k^2 + l^{2}}{\alpha^{(2)}+k^2 + l^{2}}\right)^{\nu + 1/2} =  \left (1+\frac{\alpha^{(1)} - \alpha^{(2)}}{\alpha^{(2)} +k^2 + l^{2}}\right)^{\nu + 1/2}.
$$
For $l$ and $k$ large enough, we can use the Binomial/Taylor expansion and get
\begin{align*}
    \left (1+\frac{\alpha^{(1)} - \alpha^{(2)}}{\alpha^{(2)} +k^2+ l^{2}}\right)^{\nu + 1/2} = 1 + (\nu+1/2) \frac{\alpha^{(1)} - \alpha^{(2)}}{\alpha^{(2)} + k^2+ l^{2}} + O((k^2+l^2)^{-2}).
\end{align*}
Hence, we have 
\begin{align*}
 \left( \left (\frac{\alpha^{(1)}+k^2 + l^{2}}{\alpha^{(2)}+k^2 + l^{2}}\right)^{\nu + 1/2} -1 \right)^2 &= (\nu+1/2)^2 \frac{(\alpha^{(1)} - \alpha^{(2)})^2}{(k^2 + l^{2})^2} + O((k^2+l^2)^{-3}) \\&= O((k^4+l^4)^{-1}).
\end{align*}
Since $h(l)=2l+1$, the double series is finite and the proof is concluded.

\end{proof}

\section{Conclusions} \label{sec4}
This paper develops a comprehensive framework for the study of functional Gaussian random fields on hyperspheres, extending classical results on scalar and vector-valued fields to the infinite-dimensional setting. By introducing an operator-valued analogue of the Schoenberg representation, we have characterized the covariance structure of Hilbert-valued isotropic spherical fields in terms of trace-class 
$d$-Schoenberg sequences. The provided representation sets a 
 natural harmonic-analytic decomposition of the field and 
provides the background for studying the probabilistic properties in terms of equivalence of Gaussian measures.  \par 
The main theoretical contribution of this paper is the
  functional version of the Feldman–Hájek criterion for Gaussian measures on functional spaces. The resulting characterization establishes that equivalence of Gaussian measures is governed by a square-integrability condition involving the operator-valued Schoenberg coefficients. This condition is the exact infinite-dimensional analogue of previously known results for scalar fields and yields a precise spectral description of when two functional spherical models generate mutually absolutely continuous laws. \par
  Our findings have then been supported through marginalization, for which we show that  the functional criterion controls the equivalence of all scalar projections of the field. This provides important intuition: although the functional field lives in an infinite-dimensional space, its measure-theoretic behavior is encoded in a structured family of scalar Schoenberg coefficients obtained through directional evaluations. The inequalities we derive demonstrate that the functional criterion dominates its scalar counterparts, ensuring consistency across all one-dimensional projections.
The examples we present illustrate the practical consequences of the theory. For multiquadratic bivariate models, the equivalence conditions reduce to sharp constraints on the parameters governing cross-correlation and angular dependence. For the infinite-dimensional Legendre–Mat{\'e}rn family, the spectral characterization leads to a clear description of the role of smoothness and scale parameters in determining equivalence. These examples show that the abstract theory yields concrete and interpretable results for widely used covariance families.
Beyond the theoretical contribution, the results have implications for several application domains. In spatial and spatio-temporal statistics, the equivalence framework is central to understanding asymptotics under infill sampling and the robustness of kriging under covariance misspecification. In functional data analysis, these results provide a principled foundation for modeling Hilbert-valued spherical data such as climate model fields, remote sensing images, neural activity patterns on spherical domains, and manifold-valued machine learning kernels. More broadly, the operator-valued Schoenberg framework opens new research avenues for the study of Gaussian processes on non-Euclidean and high-dimensional structures, particularly in the context of reproducing kernel Hilbert spaces and operator-valued kernels on manifolds.
Future work may explore extension to non-Gaussian fields, the characterization of equivalence under space–time or product manifolds, connections with Bayesian inverse problems on spheres, and the design of statistical procedures that explicitly exploit the operator-valued structure described here. The theory developed in this paper provides a rigorous mathematical foundation for such developments and, we hope, will stimulate further research at the interface of probability, functional analysis, and spatial statistics.

\bibliographystyle{apalike}
\bibliography{refs}
\end{document}